\newtheorem{theorem}{Theorem}[section]
\newtheorem{definition}{Definition}[section]
\newtheorem{proposition}{Proposition}[section]
\newtheorem{corollary}{Corollary}[section]
\newtheorem{example}{Example}[section]
\date{}
\begin{document}
%{3pt}% Space above
%{3pt}% Space below
%{}% Body font
%{}% Indent amount
%{\itshape}% Theorem head font
%{.}% Punctuation after theorem head
%{.5em}% Space after theorem head
%{}% Theorem head spec (can be left empty, meaning `normal')
%\theoremstyle{esbat}
%\renewenvironment{proof}{\begin{prof}}{\hfill$\blacksquare$\end{prof}}
%%\renewenvironment{proof}{\begin{prof}}{$\blacksquare$\end{prof}}
\fancyhead{}
\fancyfoot{}
\chead[M. J. Afshari, S. Varsaie]{Homotopy Classification of Super Vector Bundles and Universality}
\lhead[\thepage]{}
\rhead[]{\thepage}
\pagestyle{fancy}
%\fancyhf{}
%%\renewcommand{\chaptermark}[1]{\markboth{#1}{}}
%\renewcommand{\sectionmark}[1]{\markright{#1}}
%\renewcommand{\headrulewidth}{2pt}
%%\renewcommand{\appendixmark}[1]{\markboth{#1}{}}
%\fancyhead[EC]{\chaptername\thechapter.\hspace{2mm}\leftmark}
%\fancyhead[OC]{\thesection.\hspace{2mm}\rightmark}
%\fancyhead[EOL]{\thepage}
%\setlength{\headheight}{80pt}
%\cfoot{}
%##################################### insert \includegraphics{fig.1}
%\ifpdf
%\DeclareGraphicsRule{*}{mps}{*}{}
%\fi
%\graphicspath{{files/}}
\author{Mohammad Javad Afshari\footnote{\textit{E-mail addresses:} \href{mailto: afshari.mj@iasbs.ac.ir}%
{afshari.mj@iasbs.ac.ir} (M. J. Afshari), \href{mailto: varsaie@iasbs.ac.ir}%
{varsaie@iasbs.ac.ir} (S. Varsaie).} , Saad Varsaie
\\
\small
\textit{Department of Mathematics,}
\\
\small
\textit{Institute for Advanced Studies in Basic Sciences (IASBS),}
\\
\small
\textit{Zanjan 45137-66731, Iran}}                            
\title{Homotopy Classification of Super Vector Bundles and Universality}
\renewcommand{\baselinestretch}{1.5}
%\begin{document}
\setlength{\baselineskip}{1.5\baselineskip}
\setcounter{page}{1}
%\thispagestyle{empty}
%\nopagebreak
\maketitle
%\tableofcontents
\begin{abstract}
This study first provides a brief overview of the structure of typical Grassmann manifolds. Then a new type of supergrassmannians is construced using an odd involution in a super ringed space and by gluing superdomains together. Next, constructing a Gauss morphism of a
super vector bundle, some properties of this morphism is discussed. By this, we generalize one of
the main theorems of homotopy classification for vector bundles in supergeometry. Afterwards, a similar structure is introduced in the state of infinite-dimensional. Here our tools mainly include multilinear algebra of Grassmann algebras, the direct limit of the base spaces and the inverse limit of the structure sheaf of ringed spaces. We show that the resulting super vector bundle is a universal member of its category.
\\
\textbf{Keywords:}  Super vector bundle, Gauss morphism, $\nu$-Grassmannian, Pullback.
\\
\textbf{AMS 2020 subject classiﬁcations:} Primary, 58A50, 55R15; Secondary, 54B40, 55P10.
\end{abstract}
\section*{Introduction}	
This paper aims at extending a homotopy classification for super vector bundles. In the category of vector bundles,  it is shown that canonical vector bundles $\gamma^n_k$ on Grassmannians $Gr(n,k)$ are universal. Equivalently, associated to each vector bundle $\cal{E}$ on $ M $, up to homotopy, there exists a unique map $f:M\to Gr(n,k)$, for sufficiently large $n$, such that $\cal{E}$ is isomorphic with the induced bundle of $\gamma^n_k$ under $f$. 
\\
Totally a universal vector bundle, $ \gamma_k $, is a canonical vector bundle on an infinite dimensional Grassmannian, $ Gr_k $. It is shown that any vector bundle of rank $ k $  on a compact manifold, $ B $, is isomorphic to the pullback of a canonical vector bundle, $ \gamma_k $, along a suitable mapping such as  $B\rightarrow Gr_k $, which is proven to be unique up to homotopic approximation. As a result, there is an isomorphism,
\begin{equation*}
Vect_k(B)\cong [B,Gr_k],
\end{equation*}
between the set of isomorphism classes of rank $ k $ with base $ B $ and the set of homotopic classes of mappings  $B\rightarrow Gr_k$ . In this way, the category of vector bundles can be classified \cite{husemuller}. For this reason, the infinite-dimensional Grassmann manifold is called the classification space, and the canonical vector bundle on it is called the universal vector bundle. The class of these maps corresponds to the invariants called cohomology classes on $ B $, known as characteristic classes. All characteristic classes are derived from these maps and universal Chern classes which are characteristic classes of  Grassmann spaces $ Gr_k $ \cite{Bartocci-B-H}. In fact, Chern classes of a vector bundle may be described as the pullback of Chern classes of the universal bundle. In general, this is true for vector bundles with paracompact base spaces and so for any vector bundle whose base space is a manifold (because the compact local and second countable spaces are paracompact). In \cite{Roshandel}, some cohomology elements called $\nu$-classes, as a supergeneralization of universal Chern classes, are introduced for canonical super vector bundles over
$\nu$-Grassmannians. Our result (corollary \ref{sigmat}) in this work make it possible to extend the concept of Chern classes for all super vector bundles. In physics, Chern classes are related to special sort of quantum numbers called topological charges \cite{David}.
Therefore, the appropriate generalization of these classes in supergeometry can be a mathematical framework for describing similar charges in supersymmetry.\\
The supergrassmannians introduced in \cite{Manin} and the Grassmannians, in some sense, are homotopy equivalent (cf. subsection \ref{grassmannian}). So, the cohomology group associated to supergrassmannian is equal to that of Grassmannian. In other words, the former group contains no information about the superstructure. Hence, from classifying space viewpoint, supergrassmannians are not good generalization of Grassmannians. Therefore, new generalizations entitled $\nu$-Grassmannians have been invented.
\\
The study is organized in three sections. In the preliminaries section, following \cite{Bahadory}, we introduce $\nu$-Grassmannians denoted by $_{\nu}Gr(m|n)$, as a new supergeneralization of Grassmannians. In the second section, we show the existence of $\Gamma$, a canonical super vector bundle over $_{\nu}Gr(m|n)$. After introducing Gauss morphisms for super vector bundles, some properties of this morphisms are studied. Then, we extend one of the main theorems on homotopy classification for vetor bundles to supergeometry. In the last section, we study infinite dimensional Grassmann manifolds as ringed spaces. For this end, we obtain the Grassmannian base space using the direct limit and its sheaf structure using the inverse limit. Then, the sheaf structure of infinite dimensional $\nu$-Grassmannians (ringed spaces) is defined by the use of the direct limit \cite{Dugun}. After construcing the canonical super vector bundle over  $ _\nu Gr_{k|l}^{\infty} $, we show that this super bundle is universal.
\\
There are different approaches to generalize  Chern classes in supergeometry, such as homotopy or analytic approach. In this paper our approach is homotopic. Although there are not many articles with homotopy approach, but one may refer to \cite{V-manin} as a good example for such papers. Nevertheless, much more efforts have been made for generalizing Chern classes in supergeometry by analytic approach. One may refer to \cite{Bartocci-B}, \cite{Bartocci-B-H}, \cite{B-H}, \cite{Landi}, \cite{Manin-P}, \cite{V-Manin-P}. But, in all these works, the classes obtained in this way are nothing but the Chern classes of the reduced vector bundle(s) and they do not have any information about the superstructure.  
\section{Preliminaries}
In this section, first, we recall some basic definitions of supergeometry. Then, we introduce a supergeneralization of Grassmannian called $\nu$-Grassmannian.
\subsection{Supermanifolds}
A \textit{super ringed space} is a pair $(X, \mathcal{O})$ where $X$ is a topological space and $\mathcal{O}$ is a sheaf of commutative $\mathbb{Z}_{2}$-graded rings with units on $X$. Let $\mathcal{O}(U)= \mathcal{O}^{ev}(U) \oplus \mathcal{O}^{odd}(U)$ for any open subset $U$ of $X$. An element $a$ of $\mathcal{O}(U)$ is called a homogeneous element of parity $p(a)=0$ if $a \in \mathcal{O}^{ev}(U)$  and it is a homogeneous element of parity $p(a)=1$ if $a \in \mathcal{O}^{odd}(U)$. A morphism between two super ringed spaces $(X, \mathcal{O}_{X})$ and $(Y, \mathcal{O}_{Y})$ is a pair $(\widetilde{\psi}, \psi^{*})$ such that $\widetilde{\psi}: X \longrightarrow Y$ is a continuous map and $\psi^{*}: \mathcal{O}_{Y} \longrightarrow \widetilde{\psi}_{*}(\mathcal{O}_{X})$ is a homomorphism between the sheaves of $\mathbb{Z}_{2}$-graded rings.
\\
\begin{definition}
A \textit{superdomain} is a super ringed space $\mathbb{R}^{p|q}:=(\mathbb{R}^{p}, \mathcal{O})$ where
\begin{equation*}
\mathcal{O}= \mathbf{C}^{\infty}_{\mathbb{R}^{p}} \otimes_{\mathbb{R}} \wedge \mathbb{R}^{q}, \qquad p, q \in \mathbb{N}.
\end{equation*}
\end{definition}
By $\mathbf{C}^{\infty}_{\mathbb{R}^{p}}$ we mean the sheaf of smooth functions on $\mathbb{R}^{p}$. 
\\
A super ringed space which is locally isomorphic to $\mathbb{R}^{p|q}$ is called a \textit{supermanifold} of dimension $p|q$. Note that a morphism $(\widetilde{\psi}, \psi^{*})$ between two supermanifolds $(X, \mathcal{O}_{X})$ and $(Y, \mathcal{O}_{Y})$ is just a morphism between the super ringed spaces such that for any $x \in X$, $\psi^*: \mathcal{O}_{Y, \widetilde{\psi}(x)} \longrightarrow \widetilde{\psi}_{*}(\mathcal{O}_{X, x})$ is local, i.e., $\psi^{*}(m_{\widetilde{\psi}(x)}) \subseteq m_{x}$, where $m_x$ is the unique maximal ideal in $\mathcal{O}_{X, x}$.
 \subsection{$\nu$-Grassmannian}\label{grassmannian}
Supergrassmannians are not good generalization of Grassmannians. Indeed these two, in some sense, are homotopy equivalent. This equivalency may be shown easily in the case of projective superspaces.
\\
To this end, let $\mathbb{P}^{m|n} = (\mathbb{RP}^{m}, \mathcal{O}_{\mathbb{P}^{m|n}})$ be the real projective superspace.
By a \textit{deformation retraction} from $\mathbb{P}^{m|n}$ to $\mathbb{P}^{m}$, we mean that there is a morphism $\mathbb{P}^{m|n} \times \mathbb{R}^{1|0} \longrightarrow\mathbb{P}^{m|n}$ such that the following diagrams commute:

\begin{displaymath}
\xymatrix{
	{\mathbb{P}^{m|n} \times \mathbb{R}^{1|0}} {\ar[r]^{\quad H}}  &
	\mathbb{P}^{m|n}  \\
	\mathbb{P}^{m|n} \ar[u]^{j_0} \ar[ur]_{id}  }
,\qquad
\xymatrix{
	{\mathbb{P}^{m|n} \times \mathbb{R}^{1|0}} {\ar[r]^{\quad H}}  &
	\mathbb{P}^{m|n}  \\
	\mathbb{P}^{m|n} \ar[u]^{j_1} \ar[ur]_{j \circ r}  },
\end{displaymath}
where $ j_0 $ and $ j_1 $ are morphisms from $ \mathbb{P}^{m|n} $ to $ \mathbb{P}^{m|n} \times \mathbb{R}^{1|0} $ corresponding to pairs $ (id,0) $ and $ (id,1) $ w.r.t. the bijection $ Hom(\mathbb{P}^{m|n}  , \mathbb{P}^{m|n}\times \mathbb{R}^{1|0}) \cong Hom(\mathbb{P}^{m|n}  , \mathbb{P}^{m|n}) \times Hom(\mathbb{P}^{m|n}  , \mathbb{R}^{1|0}) $ as a result of 
categorical product in category of supermanifolds.
In addition $ j:\mathbb{P}^{m}  \longrightarrow\mathbb{P}^{m|n} $ is the embedding of reduced manifold $ \mathbb{P}^{m} $ into the supermanifold $ \mathbb{P}^{m|n} $. For more details on catagorical product and reduced manifolds see \cite{Varadarajan}, pages 137 and 133 respectively. $ r:\mathbb{P}^{m|n} \longrightarrow\mathbb{P}^{m} $ is a morphism such that $ \tilde{r}=id_{\mathbb{P}^{m}} $ and $ r^*:\mathcal{O}_{\mathbb{P}^{m}} \longrightarrow \mathcal{O}_{\mathbb{P}^{m|n}} $ is a sheaf homomorphism induced by the local homomorphisms $ \mathcal{O}_{\mathbb{P}^{m}}(U_\alpha) \stackrel{r_{\alpha}^*}{\longrightarrow} \mathcal{O}_{\mathbb{P}^{m|n}}(U_\alpha) $ with $ x^{\alpha}_i \longmapsto x^{\alpha}_i $ for an open covering $\{U_{\alpha}\}_{\alpha}$ s.t., for each $ \alpha, $ $(U_{\alpha}, \mathcal{O}_{\mathbb{P}^{m|n}}|_{U_{\alpha}})$ is isomorphic to a superdomain and $ (x_i^{\alpha})_i $ are any local coordinates  on $ U_{\alpha} $.
\begin{proposition}
	There is a deformation retraction from $\mathcal{O}_{\mathbb{P}^{m|n}}$ to $\mathcal{O}_{\mathbb{P}^{m}}$.
\end{proposition}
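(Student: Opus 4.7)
The plan is to construct the required morphism $H$ by first defining it locally on each chart of the standard affine cover of $\mathbb{P}^{m|n}$ and then verifying that the local pieces glue. On the affine chart $U_\alpha = \{y_\alpha \neq 0\}$ with local coordinates $x^\alpha_i = y_i/y_\alpha$ (even, $i \neq \alpha$) and $\xi^\alpha_j = \eta_j/y_\alpha$ (odd), I would define a morphism $H_\alpha : U_\alpha \times \mathbb{R}^{1|0} \to U_\alpha$ by the pullback rule
\[
H_\alpha^{*}(x^\alpha_i) = x^\alpha_i, \qquad H_\alpha^{*}(\xi^\alpha_j) = (1-t)\,\xi^\alpha_j,
\]
where $t$ is the standard coordinate on $\mathbb{R}^{1|0}$, and with underlying continuous map the projection $\widetilde{H}_\alpha : \widetilde{U}_\alpha \times \mathbb{R} \to \widetilde{U}_\alpha$.

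The heart of the proof is to check that these local morphisms agree on overlaps. This relies on the fact that the transition functions of projective superspace are purely even in the sense that the only denominators involved are the even monomials $x^\alpha_\beta = y_\beta/y_\alpha$, so that $x^\beta_i = x^\alpha_i / x^\alpha_\beta$ and $\xi^\beta_j = \xi^\alpha_j / x^\alpha_\beta$. Applying $H_\alpha^{*}$ on one side and $H_\beta^{*}$ on the other, the even factor $(1-t)$ commutes with division by $x^\alpha_\beta$, which yields $H_\alpha^{*}(\xi^\beta_j) = (1-t)\xi^\beta_j = H_\beta^{*}(\xi^\beta_j)$, and analogously for the even coordinates. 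Hence the $H_\alpha$ glue to a global morphism $H : \mathbb{P}^{m|n} \times \mathbb{R}^{1|0} \to \mathbb{P}^{m|n}$.

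To finish, I would verify the two triangles. Pre-composing $H^{*}$ with $j_0^{*}$ (which sets $t=0$) gives $x^\alpha_i \mapsto x^\alpha_i$ and $\xi^\alpha_j \mapsto \xi^\alpha_j$, i.e., the identity; pre-composing with $j_1^{*}$ (which sets $t=1$) gives $x^\alpha_i \mapsto x^\alpha_i$ and $\xi^\alpha_j \mapsto 0$, which matches $(j \circ r)^{*}$ by the very definition of the retraction $r$ and the embedding $j$ of the reduced manifold.

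The main obstacle is conceptual rather than computational: one must notice that scaling the odd coordinates by the even parameter $1-t$ yields a globally well-defined morphism only because the transition cocycle of $\mathbb{P}^{m|n}$ acts on $\xi$ linearly with purely-even coefficients. On a generic supermanifold, whose transitions can mix even and odd terms nontrivially, the same naive recipe would fail to be compatible on overlaps, and a more delicate construction would be required.
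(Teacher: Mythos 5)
Your proof is correct and follows essentially the same route as the paper: define $H$ chart by chart via $x^\alpha_i\mapsto x^\alpha_i$, $\xi^\alpha_j\mapsto(1-t)\xi^\alpha_j$ with underlying map the projection, then verify the two triangles. You additionally supply the one detail the paper leaves implicit (``one may easily show\ldots''), namely that the local definitions glue because the transition cocycle of $\mathbb{P}^{m|n}$ acts on the odd coordinates linearly with purely even coefficients.
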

\begin{proof}
Let $ \{U_{\alpha}\} $ be a covering defined as above and set $ U_{\alpha}^{m|n}=(U_{\alpha}, \mathcal{O}_{\mathbb{P}^{m|n}}|_{U_{\alpha}}) $. Now consider a morphism $ H:\mathbb{P}^{m|n} \times \mathbb{R}^{1|0} \longrightarrow\mathbb{P}^{m|n} $ which $ \tilde{H}:\mathbb{P}^{m} \times \mathbb{R} \longrightarrow\mathbb{P}^{m} $ is a projection map and $ H^*:\mathcal{O}_{\mathbb{P}^{m|n}} \longrightarrow \mathcal{O}_{\mathbb{P}^{m|n}\times \mathbb{R}^{1|0}}$ is a sheaf homomorphism induced by the local homorphisms $ \mathcal{O}_{{U_{\alpha}^{m|n}}} \longrightarrow \mathcal{O}_{{U_{\alpha}^{m|n}}\times \mathbb{R}^{1|0}} $ with:
\begin{equation*}
x_{k} \longmapsto x_{k}, \qquad e_{l} \longmapsto (1-t)e_{l}.
\end{equation*}
Then one may easily show that for this morphism, all above diagrams commute.
\end{proof}
This proposition shows that in the construction of projective superspaces, the odd variables do not play principal roles. Solving this problem
is our motivation for defining $\nu$-Projective spaces or generally $\nu$-Grassmannians. Before that, it is necessary to recall some basic concepts.
\\
A  $\nu$-domain with dimension $ p|q $ is a super ringed space 
\begin{equation*}
	\mathbb{R}^{p|q}:=(\mathbb{R}^{p},\mathcal{O}),\qquad\mathcal{O}=\mathbf{C}^{\infty}_{\mathbb{R}^{p}} \otimes_{\mathbb{R}} \wedge \mathbb{R}^{q}, \qquad p, q \in \mathbb{N},
\end{equation*}
with an odd involution  $\nu$, i.e.
\begin{equation*}
	\nu: \mathcal{O} \longrightarrow \mathcal{O}, \qquad \nu(\mathcal{O}^{ev}) \subseteq \mathcal{O}^{odd}, \qquad \nu(\mathcal{O}^{odd}) \subseteq \mathcal{O}^{ev}, \qquad \nu^{2}=id.
\end{equation*}
In addition, $\nu$ is a homomorphism between $\mathbb{C}^{\infty}$-modules.
\\
Let $k$, $l$, $m$ and $n$ be non-negative integers with $k<m$ and $l<n$. For convenience from now on, we write $ Gr(k,m) $ as $ Gr_{k}^{m} $ , and set $p=k(m-k)+l(n-l)$ and $q=k(n-l)+l(m-k)$.
\\
A \textit{real} $\nu$-\textit{Grassmannian}, $_{\nu}Gr_{k|l}(m|n)$, or shortly $_{\nu}Gr= (Gr_{k}^{m} \times Gr_{l}^{n}, \mathcal{G})$, is a real superspace obtained by gluing $\nu$-domains $(\mathbb{R}^{p}, \mathcal{O})$ of dimension $p|q$.
\\
Here, we need to set some notations that are useful later.
\\
Let $I$ be a $k|l$ multi-index, i.e., an increasing finite sequence of $\{1, \cdots, m+n\}$ with $k+l$ elements. So one may set
\begin{equation*}
I:= \{i_{a}\}_{a=1}^{k+l}.
\end{equation*}
A standard $(k|l) \times (m|n)$ supermatrix, say $T$, may be decomposed into four blocks as follows:
$$\left[ \! \! \!
\begin{tabular}{ccc}
$A_{k\times m}$   &  $\vline$  & $B_{k \times n}$\\
---  ---  ---             &  $\vline$  & ---  ---  ---          \\
$C_{l \times m}$   &   $\vline$  & $D_{l \times n}$
\end{tabular}
\! \! \! \right].$$
%\begin{equation*}
%\begin{bmatrix}
%A_{k\times m}    & B_{k \times n}
%\\
%C_{l \times m} & D_{l \times n}
%\end{bmatrix}
%\end{equation*}
The upper left and lower right blocks are filled by even elements. These two blocks together form the even part of $T$. The upper right and lower left blocks are filled by odd elements and form the odd part of $T$.
\\
The columns with indices in $I$ together form a minor denoted by $M_I(T)$. 
\\
A pseudo-unit matrix $id_{I}$ corresponding to $k|l$ multi-index $I$, is a $(k|l) \times (k|l)$ matrix whose all entries are zero except on its main diagonal that are $1$ or $1\nu$, where $1\nu$ is a formal expression used as odd unit. For each open subset $U$ of $\mathbb{R}^p$ and each $z \in \mathcal{O}(U)$, we also need the following rules:
\begin{equation*}
z.(1\nu):= \nu(z), \qquad (1\nu)(1\nu)=1.
\end{equation*}
So for each $I$, one has
\begin{equation*}
id_{I}.id_{I}= id.
\end{equation*}
As a result, for each $I$ and each $(k|l) \times(k|l)$ supermatrix $T$, we can see that
\begin{equation*}
T=(T.id_{I}).id_{I}.
\end{equation*}
The following steps may be taken in order to construct the structure sheaf of $_{\nu}Gr$:
\\
Step1: For each $k|l$ multi-index $I$, consider the $\nu$-domain $(V_{I}, \mathcal{G}_{I})$.
\\ 
Step2: Corresponding to each $I$, consider a $(k|l) \times (m|n)$ supermatrix $A^{I}$ which its columns with indices in $I$ together form $id_{I}$. The formal expression $1\nu$ appears when a diagonal entry of $id_{I}$ places in odd part of $A^{I}$.
\\
In addition, the other columns of $A^{I}$, from left to right, and each one from up to down, are filled by even and odd coordinates of $\nu$-domain $\mathcal{G}_{I}$, i.e., 
%\begin{equation*}
$x_1, ... , x_k, e_1, ... , e_l, ... , x_{(m- k-1)k+1}, ... , x_{(m- k)k},  e_{(m- k-1)l+1},$ ...$, e_{(m- k)l} ,e_{(m- k)l+1}, ... ,e_{(m- k)l+ k}, x_{(m- k)k+1}, ... , x_{(m- k)k+ l}, ... $ .
%\end{equation*}
respectively. Afterwards, each entry, say $a$, that is in a block with opposite parity is replaced by $\nu(a)$.
\\
As an example, consider $_{\nu}Gr_{2|2}(3|3)$ with $I=\{ 1, 2, 3, 6\}$. Then one has
$$\left[ \! \! \! \!
\begin{tabular}{ccccccc}
1   & 0 & 0          & ; & $\nu x_{1}$ & $e_{3}$ & 0\\
0   & 1 & 0          & ; & $\nu x_{2}$ & $e_{4}$ & 0\\
--   &-- & --         & --& --  --            &      --  -- & --\\
0   & 0 & $1\nu$ & ; & $\nu e_{1}$  & $x_{3}$ & 0
\\
0   & 0 & 0          & ; & $\nu e_{2}$ & $x_{4}$ & 1
\end{tabular}
\! \! \! \! \right].$$
The columns of $A^{I}$ with indices in $I$ together form the following supermatrix:
$$M_{I}(A^{I}):= id_{I}=\left[ \! \! \! \!
\begin{tabular}{ccccccc}
1   & 0 & 0          & ; & 0\\
0   & 1 & 0          & ; & 0\\
--   &-- & --         & --& --\\
0   & 0 & $1\nu$ & ; & 0
\\
0   & 0 & 0          & ; & 1
\end{tabular}
\! \! \! \! \right].$$
For each pair multi-indices $I$ and $J$, define the set $V_{IJ}$ to be the largest subset of  $V_{I}$ on which $M_{J}(A^{I}).id_{J}$  is invertible on it.
\\
Step3: On $V_{IJ}$, the equality
\begin{equation*}
\big(M_{J}(A^{I}).id_{J}\big)^{-1}.A^{I} = A^{J},
\end{equation*}
defines a correspondence between even and odd coordinates of $V_{J}$ and rational expressions in $\mathcal{G}_{I}$ appear as corresponding entries of matrices on the two sides of the equality. By (\cite{Varadarajan}, Th 4.3.1), one has a unique homomorphism
\begin{equation*}
\varphi_{IJ}^{*}: \mathcal{G}_{J|_{V_{JI}}} \longrightarrow \mathcal{G}_{I|_{V_{IJ}}}.
\end{equation*}
Step4: The homomorphisms $\varphi_{IJ}^{*}$ satisfy the gluing conditions, i.e., for each $I$, $J$ and $K$, we have
\begin{itemize}
	\item[1)]
	$\varphi_{II}^{*}= id$.
	\item[2)]
	$\varphi_{IJ}^{*} \circ \varphi_{JI}^{*} = id$.
	\item[3)]
	$\varphi_{IK}^{*} \circ \varphi_{KJ}^{*} \circ \varphi_{JI}^{*}= id$.
\end{itemize}
In the first condition, $\varphi_{II}^{*}$ is defined by the following equality:
\begin{align*}
\big(M_{I}(A^{I}).id_{I}\big)^{-1}.A^{I} = \big(id_{I}.id_{I}\big)^{-1}.A^{I}.
\end{align*}
Since $\big(id_{I}.id_{I}\big)^{-1}= id$, we have $\varphi_{II}^{*}= id$.
\\
For the last condition, note that $\varphi_{KJ}^{*} \circ \varphi_{JI}^{*}$ is obtained from the equality
\begin{equation*}
\Big( M_{I}\Big( \big(M_{J}(A^{K}) .id_{J}\big)^{-1}.A^{K}\Big).id_{I}\Big)^{-1}\Big( \big(M_{J}(A^{K}) .id_{J}\big)^{-1}.A^{K}\Big)= A^{I}.
\end{equation*}
For the left hand side of this equality, one has
\begin{align*}
&\Big(\big(M_{J}(A^{K}) .id_{J}\big)^{-1}. M_{I}(A^{K}).id_{I}\Big)^{-1}\Big( \big(M_{J}(A^{K}) .id_{J}\big)^{-1}.A^{K}\Big)
\\
=&\big(M_{I}(A^{K}).id_{I}\big)^{-1} \big(M_{J}(A^{K}) .id_{J}\big)\big(M_{J}(A^{K}) .id_{J}\big)^{-1}. A^{k}
\\
=&\big(M_{I}(A^{K}).id_{I}\big)^{-1}.A^{K}= A^{I}.
\end{align*}
Thus the third condition is established.
\\
The second condition may results from other conditions as follows:
\begin{align*}
&\varphi_{IJ}^{*} \circ \varphi_{JI}^{*}=\varphi_{II}^{*},
\\
&\varphi_{II}^{*}=id.
\end{align*}
\section{Super vector bundles}
Here, we recall the definition of super vector bundles and their homomorphisms. Then, we introduce a canonical super vector bundle over $\nu$-Grassmannian.
\begin{definition}
By a \textit{super vector bundle} $\mathcal{E}$ of rank $k|l$ over a supermanifold $(M, \mathcal{O})$, we mean a sheaf of $\mathbb{Z}_{2}$-graded $\mathcal{O}$-modules on $M$ which locally is a free $k|l$ module.
\end{definition}
In other words, there exists an open cover $\{U_{\alpha}\}_{\alpha}$ of $M$ such that 
\begin{equation*}
\mathcal{E}(U_{\alpha}) \simeq \big(\mathcal{O}(U_{\alpha}) \big)^{k} \oplus \big(\pi\mathcal{O}(U_{\alpha}) \big)^{l},
\end{equation*}
or equivalently,
\begin{equation*}
\mathcal{E}(U_{\alpha}) \simeq \mathcal{O}(U_{\alpha}) \otimes_{\mathbb{R}} \big(\mathbb{R}^{k} \oplus \pi(\mathbb{R}^{l}) \big).
\end{equation*}
For example, let $\mathcal{O}_{M}^{k|l} :=(\oplus_{i=1}^{k} \mathcal{O}) \oplus (\oplus_{j=1}^{l} \pi \mathcal{O})$ where $\pi \mathcal{O}$ is an $\mathcal{O}$-module which satisfies
\begin{equation*}
(\pi \mathcal{O})^{ev} = \mathcal{O}^{odd}, \qquad (\pi \mathcal{O})^{odd} = \mathcal{O}^{ev}.
\end{equation*}
The right multiplication is the same as in $\mathcal{O}$ and the left multiplication is as follows:
\begin{equation*}
z(\pi w):= (-1)^{p(z)}\pi(zw),
\end{equation*}
where $\pi w$ is an element of $\pi\mathcal{O}$.
\\
Hence, $\mathcal{O}_{M}^{k|l}$ is a super vector bundle over the supermanifold $M$.
\\
Let $\mathcal{E}$ and $\mathcal{E}^{\prime}$ be two super vector bundles over a supermanifold $(M, \mathcal{O})$. By a homomorphism from $\mathcal{E}$ to $\mathcal{E}^{\prime}$, we mean an even sheaf homomorphism $\tau: \mathcal{E} \longrightarrow \mathcal{E}^{\prime}$.
\\
Each super vector bundle over $M$ isomorphic to $\mathcal{O}_{M}^{k|l}$, is called a \textit{trivial super vector bundle} of rank $k|l$.
\subsection{Canonical super vector bundle over $\nu$-Grassmannian}
Let $I$ be a $k|l$ multi-index and let $(V_{I}, \mathcal{G}_{I})$ be a $\nu$-domain. Consider the trivial super vector bundle
\begin{equation*}
\Gamma_{I}:= \mathcal{G}_{I} \otimes_{\mathbb{R}} \big(\mathbb{R}^{k} \oplus \pi(\mathbb{R}^{l}) \big) = \mathcal{G}_{I} \otimes_{\mathbb{R}} \mathbb{R}^{k|l}.
\end{equation*}
By gluing these super vector bundles through suitable homomorphisms, one may construct a super vector bundle $\Gamma$ over $\nu$-Grassmannian $_{\nu}Gr$. For this, consider a basis $\{e_{1}, \cdots, e_{k}, f_{1}, \cdots, f_{l}\}$ for $\mathbb{R}^{k|l}$ and set
\begin{equation*}
m:= \big(M_{J}(A^{I}) .id_{J}\big)^{-1},
\end{equation*}
where $\big(M_{J}(A^{I}) .id_{J}\big)^{-1}$ is introduced in subsection \ref{grassmannian}. Gluing morphisms are defined as follows:
\begin{equation*}
\psi_{IJ}^{*}: \Gamma_{J|_{V_{JI}}} \longrightarrow \Gamma_{I|_{V_{IJ}}},
\end{equation*}
\begin{equation*}
a \otimes e_{i} \longmapsto \varphi_{IJ|_{V_{JI}}}^{*}(a) \Big( \sum_{t \leq k} m_{it} \otimes e_{t} + \sum_{t > k}m_{it}\otimes f_{t}\Big),
\end{equation*}
where the elements $m_{it}$ are the entries of the $i$-th column of the supermatrix $m$. The morphisms $\psi_{IJ}^{*}$ satisfy the gluing conditions. So $\Gamma_{I}^,$s may glued together to form a super vector bundle denoted by $\Gamma$.
\subsection{Gauss morphisms}
In common geometry, a Gauss map is defined as a map from the total space of a vector bundle, say $\xi$, to a Euclidean space such that its restriction to any fiber is a monomorphism. Equivalently, one may consider a $1-1$ strong bundle map from $\xi$ to a trivial vecor bundle. The Gauss map induces a homomorphism between the vector bundle and the canonical vector bundle on a Grassmannian $Gr_{k}^n$ with sufficiently large value of $n$. A simple method for constructing such a map is the use of coordinate representation for $\xi$. In this section, for constructing a Gauss morphism of a super vector bundle, one may use the same method.
\begin{definition}
A super vector bundle  $\mathcal{E}$ over a supermanifold  $(M, \mathcal{O})$  is of finite type if there is a finite open cover  $\{U_{\alpha}\}_{\alpha=1}^{t}$  for $ M $ such that for each $ \alpha $, the restriction of  $\mathcal{E}$  to $U_{\alpha}$   is trivial, i.e., there exist isomorphisms as
\begin{equation*}
\psi_{\alpha}^*: \mathcal{E}|_{U_{\alpha}} \overset{\simeq}{\longrightarrow} \mathcal{O}|_{U_{\alpha}} \otimes _{\mathbb{R}} \mathbb{R}^{k|l}.
\end{equation*}
\end{definition} 
\begin{definition}
A Gauss morphism of $\mathcal{E}$ is a homomorphism from $\mathcal{E}$ to the trivial super vector bundle over $(M, \mathcal{O})$ so that its kernel is trivial.
\end{definition}
Let $\{e_{1}, \cdots, e_{k}, f_{1}, \cdots, f_{l}\}$ be a basis for $\mathbb{R}^{k|l}$ so that $\{e_{i}\}$ and $\{f_{j}\}$ are respectively bases for $\mathbb{R}^{k}$ and $\pi(\mathbb{R}^{l})$, then $B:=\{1 \otimes e_{1}, \cdots, 1 \otimes e_{k}, 1 \otimes f_{1}, \cdots, 1 \otimes f_{l}\}$ is a generator for the $\mathcal{O}(U_{\alpha})$-module, $\mathcal{O}(U_{\alpha}) \otimes_{\mathbb{R}} \mathbb{R}^{k|l}$.
\\
Set
\begin{equation}\label{salpha}
s_{i}^{\alpha}:= \psi_{\alpha}^{*^{-1}} (1 \otimes e_{i}), \qquad t_{j}^{\alpha}:= \psi_{\alpha}^{*^{-1}} (1 \otimes f_{j}).
\end{equation}
So $(\psi_{\alpha}^{*})^{-1} (B)$ is a generator for $\mathcal{E}(U_{\alpha})$ as an $\mathcal{O}(U_{\alpha})$-module.
\\
Choose a partition of unity $\{\rho_{\alpha}\}_{\alpha=1}^{t}$ subordinate to the covering $\{U_{\alpha}\}_{\alpha=1}^{t}$. Considering $s$ as a global section of $\mathcal{E}(M)$, we can write 
\begin{equation*}
s=\sum_{\alpha=1}^{t} \rho_{\alpha}r_{\alpha}(s),
\end{equation*}
where $r_{\alpha}$ is the restriction morphism. In addition, one has
\begin{equation*}
r_{\alpha}(s)=\sum_{i=1}^{k} \lambda_{i}^{\alpha}s_{i}^{\alpha} + \sum_{j=1}^{l} \delta_{j}^{\alpha}t_{j}^{\alpha}, \qquad \lambda_{i}^{\alpha}, \delta_{j}^{\alpha} \in \mathcal{O}(U_{\alpha}).
\end{equation*}
By the last two equalities, we have
\begin{equation*}
s=\sum_{\alpha=1}^{t} \rho_{\alpha} \Big( \sum_{i=1}^{k} \lambda_{i}^{\alpha}s_{i}^{\alpha} + \sum_{j=1}^{l} \delta_{j}^{\alpha}t_{j}^{\alpha}\Big) = \sum_{\alpha=1}^{t}\sum_{i=1}^{k} \sqrt{\rho_{\alpha}}\lambda_{i}^{\alpha}.
\sqrt{\rho_{\alpha}}s_{i}^{\alpha}+ \sum_{\alpha=1}^{t}\sum_{j=1}^{l} \sqrt{\rho_{\alpha}}\delta_{j}^{\alpha}.\sqrt{\rho_{\alpha}}t_{j}^{\alpha},
\end{equation*}
where $\sqrt{\rho_{\alpha}}s_{i}^{\alpha}$ and $\sqrt{\rho_{\alpha}}t_{j}^{\alpha}$ are even and odd sections of $\mathcal{E}(M)$ respectively, and $\sqrt{\rho_{\alpha}}\lambda_{i}^{\alpha}$ and $\sqrt{\rho_{\alpha}}\delta_{j}^{\alpha}$
are sections of $\mathcal{O}(M)$. So $A:= \{\sqrt{\rho_{\alpha}}s_{i}^{\alpha}\}_{\alpha, i} \cup \{\sqrt{\rho_{\alpha}}t_{j}^{\alpha}\}_{\alpha, j}$ is a generating set of $\mathcal{E}(M)$.
\\
Now, for each $\alpha$, consider the following monomorphism between $\mathcal{O}(U_{\alpha})$-modules:
\begin{align*}
i_{\alpha}: \mathcal{O}(U_{\alpha}) \otimes_{\mathbb{R}} \mathbb{R}^{k|l} &\longrightarrow \mathcal{O}(U_{\alpha}) \otimes_{\mathbb{R}} \mathbb{R}^{tk|tl},\\
1 \otimes e_{i} \longmapsto & 1 \otimes e_{(\alpha-1)k+i},\\
1 \otimes f_{j} \longmapsto & 1 \otimes f_{(\alpha-1)l+j}.
\end{align*}
Set
\begin{equation}
g(s):= \sum_{\alpha=1}^{t} \rho_{\alpha}. i_{\alpha} \circ \psi_{\alpha}^* \circ r_{\alpha}(s).
\end{equation}
It is easy to see that $g$ is a Gauss morphism of $\mathcal{E}(M)$.
\subsection{Gauss supermatrix}\label{Gauss sup}
Now, we are going to obtain the matrix of the Gauss supermap $g$.
\\
\begin{definition}
By a Gauss supermatrix associated to the super vector bundle $\mathcal{E}$, we mean a supermatrix, say $G$, which is obtained as follows with respect to the generating set $A$:
\begin{equation*}
g\big(\sqrt{\rho_{\beta}}s_{j}^{\beta}\big) = \sum_{\alpha=1}^{t} \rho_{\alpha}. i_{\alpha} \circ \psi_{\alpha}^* \circ r_{\alpha}\big( \sqrt{\rho_{\beta}}s_{j}^{\beta}\big),
\end{equation*}
where $g$ is a Gauss morphism of $\mathcal{E}$.
\end{definition}
By \eqref{salpha}, we have
\begin{align}\label{grho}
g\big(\sqrt{\rho_{\beta}}s_{j}^{\beta}\big) &= \sum_{\alpha=1}^{t} \rho_{\alpha}. i_{\alpha} \circ \psi_{\alpha}^* \circ \psi_{\beta}^{*^{-1}}\big( \sqrt{\rho_{\beta}}e_{j}\big) \nonumber
\\
&=\sum_{i=1}^{k}\sum_{\alpha=1}^{t} \rho_{\alpha} \sqrt{\rho_{\beta}} a_{ij}^{\alpha\beta}e_{(\alpha-1)k+i}+\sum_{i=1}^{l}\sum_{\alpha=1}^{t}\rho_{\alpha} \sqrt{\rho_{\beta}}a_{(k+i)j}^{\alpha\beta}f_{(\alpha-1)l+i},
\end{align}
where $[a_{ij}^{\alpha\beta}]$ is a matrix of $\psi_{\alpha}^* \circ \psi_{\beta}^{*^{-1}}$ relative to the generator $B$. The natural ordering on $\{i_{\alpha}e_{i}\}_{\alpha, i}$ and $\{i_{\alpha}f_{s}\}_{\alpha, s}$ induces an ordering on their coefficients in \eqref{grho}. Let G be a $tk|tl \times tk|tl$ standard supermatrix. Fill the even and odd top blocks of $G$ by these coefficients according to their parity from left to right along the $\big((\beta-1)k+j\big)$-th row, $1 \leq j \leq k$, $1 \leq \beta \leq t$. Similarly, by coefficients in the decomposition of $g\big(\sqrt{\rho_{\beta}}t_{r}^{\beta}\big)$, one may fill the odd and even down blocks of $G$ along the $\big((\beta -1)k+r\big)$-th row, $1 \leq r \leq l$, $1 \leq \beta \leq t$.
\begin{example}
	For $k|l=2|1$ and a suitable covering with two elements, i.e., $t=2$, we have
	\begin{equation*}
	g\big(\sqrt{\rho_{2}}s_{2}^{2}\big)= \rho_{1} \sqrt{\rho_{2}} a_{12}^{12}e_{1} +\rho_{1} \sqrt{\rho_{2}} a_{22}^{12}e_{2}+\rho_{2} \sqrt{\rho_{2}} a_{12}^{22}e_{3}+\rho_{2} \sqrt{\rho_{2}} a_{22}^{22}e_{4}+\rho_{1} \sqrt{\rho_{2}} a_{32}^{12}f_{1}+\rho_{2} \sqrt{\rho_{2}} a_{32}^{22}f_{2}.  
	\end{equation*}
	Then the $4$-th row of the associated supermatrix $G$ is as below:
	$$\left[ \! \! \! \!
	\begin{tabular}{ccccccc}
	&                                                          &                                                           &                                                                                               \vdots 
	& ;                          &                                                            &
	\\
	$\rho_{1} \sqrt{\rho_{2}} a_{12}^{12}$   & $\rho_{1} \sqrt{\rho_{2}} a_{22}^{12}$ & $\rho_{2} \sqrt{\rho_{2}} a_{12}^{22}$ & $\rho_{2} \sqrt{\rho_{2}} a_{22}^{22}$  & ;                         &$\rho_{1} \sqrt{\rho_{2}} a_{32}^{12}$ & $\rho_{2} \sqrt{\rho_{2}} a_{32}^{22}$\\
	-- --         -- --            -- --                        &     -- --       -- --    -- --                         & -- --     -- --     -- --                              & -- --   -- --  -- --                                                         & -- --   -- --   -- --    &-- --  -- -- -- --                                        &   -- --   -- --   -- --  \\
	&                                                          &                                                           &                                                                                               \vdots 
	& ;  &                                                             &
	\\
	\end{tabular}
	\! \! \! \! \right].$$
\end{example}
On the other hand, one may consider a covering $\{U_{\alpha}\}_{\alpha}$ so that for each $\alpha$, we have an isomorphism
\begin{equation}\label{isomorphism}
\mathcal{O}(U_{\alpha}) \overset{\simeq}{\longrightarrow} \mathbf{C}^{\infty}(\mathbb{R}^{m}) \otimes _{\mathbb{R}} \wedge \mathbb{R}^{n}.
\end{equation}
Let $\nu$ be an odd involution on $\mathbf{C}^{\infty}\big(\mathbb{R}^{(k^{2}+l^{2})(t-1)}\big) \otimes _{\mathbb{R}} \wedge \mathbb{R}^{2kl(t-1)}$ preserving $\mathbf{C}^{\infty}(\mathbb{R}^{m}) \otimes _{\mathbb{R}} \wedge \mathbb{R}^{n}$ as a subalgebra. Thus, it induces an odd involution on $\mathcal{O}(U_{\alpha})$ through the isomorphism \eqref{isomorphism} which is denoted by the same notation $\nu$.
\begin{theorem}\label{Sigma}
	Let $\cal{E}$ be a super vector bundle over a supermanifold $(M, \cal{O})$ and let $G$ be a Gauss supermatrix associated to $\cal{E}$. Then the Gauss supermatrix induces a homomorphism from $\mathcal{G}$, the structure sheaf of   $_{\nu}Gr$, to $\mathcal{O}$.
\end{theorem}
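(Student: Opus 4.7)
The plan is to produce a morphism of super ringed spaces $(M,\mathcal{O}) \to ({}_{\nu}Gr, \mathcal{G})$ whose sheaf pullback is the claimed homomorphism. The underlying continuous map $\tilde f$ should send each $x \in M$ to the point of $Gr_k^{tk}\times Gr_l^{tl}$ determined by the image subspace of the fiber map $g|_x:\mathbb{R}^{k|l}\hookrightarrow\mathbb{R}^{tk|tl}$; this is well-defined because $g$ is a Gauss morphism (trivial kernel) and hence $G(x)$ has full rank $k|l$ at every point.

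First, for each $k|l$ multi-index $I$ of $\{1,\ldots,tk+tl\}$, I would let $U_I \subseteq M$ denote the open locus on which the image subspace at $x$ sits inside the chart $V_I$ of ${}_{\nu}Gr$, equivalently, the locus on which the appropriate $I$-indexed $(k|l)\times(k|l)$ block extracted from $G$ (after multiplication on the right by $id_I$) is invertible. Since $g|_x$ is injective and the Grassmannian charts $V_I$ cover $Gr_k^{tk}\times Gr_l^{tl}$, the family $\{U_I\}$ is an open cover of $M$ with $\tilde f(U_I)\subseteq V_I$; the right-multiplication by $id_I$ only $\nu$-transforms certain columns and does not destroy invertibility at the reduced level.

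On each $U_I$, following the same recipe used in the construction of ${}_{\nu}Gr$, I would pass from $G$ to the normalized matrix
\[
A_M^I \;:=\; \bigl(M_I(G)\cdot id_I\bigr)^{-1}\cdot G,
\]
a $(k|l)\times(tk|tl)$ matrix over $\mathcal{O}(U_I)$ whose $I$-columns reproduce $id_I$. Comparing $A_M^I$ position-by-position with the canonical template $A^I$ of Section~\ref{grassmannian}, whose non-$I$ entries are the coordinates of $\mathcal{G}_I$ with some replaced by their $\nu$-images, the definition of $f_I^*$ on coordinate generators sends each $x_a$ (resp.~$e_b$) to the entry of $A_M^I$ occupying the slot where $A^I$ displays $x_a$ (resp.~$e_b$), and to the $\nu$-image of that entry at the slots where $A^I$ displays $\nu(x_a)$ (resp.~$\nu(e_b)$). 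Because the standing hypothesis installs a compatible $\nu$ on $\mathcal{O}|_{U_\alpha}$ via the isomorphism~\eqref{isomorphism}, these assignments respect parity; by Theorem~4.3.1 of \cite{Varadarajan}, they extend uniquely to an even homomorphism of sheaves of $\mathbb{Z}_2$-graded algebras $f_I^*:\mathcal{G}_I \to \mathcal{O}|_{U_I}$.

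The final step is compatibility on overlaps. On $U_I \cap U_J$, the required equality $f_J^* = f_I^* \circ \varphi_{IJ}^*$ reduces, via the uniqueness clause of Varadarajan's theorem, to the matrix identity
\[
A_M^J \;=\; \bigl(M_J(A_M^I)\cdot id_J\bigr)^{-1}\cdot A_M^I,
\]
which is a verbatim repetition of the cocycle computation already carried out for the $\varphi_{IJ}^*$'s in Step~4 of Section~\ref{grassmannian}, with $A^I, A^J$ replaced throughout by $A_M^I, A_M^J$. Once this is verified, the local data $(U_I, f_I^*)$ glue into a single morphism of super ringed spaces $(\tilde f, f^*):(M,\mathcal{O}) \to ({}_{\nu}Gr, \mathcal{G})$, and $f^*$ is the desired sheaf homomorphism $\mathcal{G} \to \tilde f_*\mathcal{O}$. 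The main obstacle I expect is the $\nu$-bookkeeping: one must check that the parities of the assignments match at every slot, that the $\nu$ on $\mathcal{O}$ used on the right-hand side is precisely the one induced by the standing hypothesis, and that the overlap identity above lifts cleanly from the level of matrix entries to the level of coordinate substitutions in $\mathcal{G}_I$ rather than merely entries of matrices.
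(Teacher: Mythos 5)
Your construction is correct in outline, but it takes a genuinely different route from the paper's proof, so a comparison is in order. The paper never inverts a block of $G$ and never covers $M$ by invertibility loci. Instead, for each multi-index $I$ it forms $G(I)$ (the $k+l$ rows of $G$ indexed by $I$), multiplies by $id_I$ on the left, deletes the $I$-columns, and uses the resulting entries $y^{I}_{ij}$ --- which are global sections of $\mathcal{O}(M)$ --- as the images of the chart coordinates $x^{I}_{ij}$, obtaining a homomorphism $\varphi_I^{*}:\mathcal{G}_I(V_I)\to\mathcal{O}(M)$ defined on all of $M$. It then assembles the map on global sections by a partition of unity $\{\rho_I'\}$ on the \emph{Grassmannian} side, $\sigma^{*}(h)=\sum_I\varphi_I^{*}(\rho_I'\,h|_{V_I})$, deferring well-definedness and the existence of the induced map $\widetilde{\sigma}$ to \cite{Roshandel}. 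Your version --- cover $M$ by the loci $U_I$ where the relevant $I$-block is invertible, normalize so that the $I$-minor becomes $id_I$, define chart morphisms via Theorem 4.3.1 of \cite{Varadarajan}, and glue using the cocycle identity $A_M^{J}=\bigl(M_J(A_M^{I})\cdot id_J\bigr)^{-1}A_M^{I}$ --- is the classical Gauss-map argument; it makes the multiplicativity and locality of the resulting $f^{*}$ automatic, and produces the morphism of super ringed spaces directly, whereas the paper's formula is additive by construction but its compatibility with products is hidden in the citation. What the paper's route buys is a single closed formula on global sections with no inversions and no overlap bookkeeping.

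Two repairs are needed in your write-up, though neither is fatal. First, $M_I(G)$ does not typecheck: $G$ is $tk|tl\times tk|tl$, so you must first pass to the $(k|l)\times(tk|tl)$ matrix $G(I)$ of rows indexed by $I$ and invert $M_I\bigl(G(I)\bigr)\cdot id_I$. Second, your ``equivalently'' is false: the locus where this diagonal block is invertible is in general strictly smaller than the locus where $g(E_x)$ lies in the chart $V_I$, since the $I$-rows of $G$ need not span the image there. The clean reason the $U_I$ still cover $M$ is the special structure of $G$: for the block multi-index $I_\beta$ attached to the $\beta$-th trivializing chart, the $I_\beta$-diagonal block of $G$ equals $\rho_\beta^{3/2}\cdot id$, so $U_{I_\beta}\supseteq\{\rho_\beta>0\}$ and these sets already cover $M$. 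Relatedly, ``trivial kernel'' of a sheaf homomorphism does not by itself give injectivity on fibers; it is the identity $\sum_\alpha\rho_\alpha=1$ together with the explicit form $g=\sum_\alpha\rho_\alpha\, i_\alpha\circ\psi_\alpha^{*}\circ r_\alpha$ that does.
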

\begin{proof}
Let $h$ be an element of $\mathcal{G}(Gr_k^m\times Gr_l^n)$, where $ m=tk$, $ n=tl$, and $\{\rho_{I}^{\prime}\}$ be a partition of unity subordinate to the covering $\{V_{I}\}_{I \subseteq \{1, \cdots, t(k+l)\}}$, then one has
\begin{equation}
h= \sum_{\substack{I \subseteq \{1, \cdots, t(k+l)\},\\ |I|=k+l}} \rho_{I}^{\prime}.h|_{V_{I}}.
\end{equation}
Consider the rows of $G$ with indices in $I$ as a $(k+l) \times t(k+l)$ supermatrix and name it $G(I)$. Then multiply it by $id_{I}$ from left, i.e., $id_{I}. G(I)$ and delete the columns with indices in $I$, we get
$$\left[ \! \! \! \!
\begin{tabular}{ccccccc}
$y_{11}^{I}$  $\qquad$     & $\cdots$    $\qquad$       &$y_{1\big((t-1)(k+l)\big)}^{I}$\\
$\vdots$          $\qquad$    &  $\ddots$   $\qquad$        &$\vdots$ \\
$y_{(k+l)1}^{I}$ $\qquad$ &  $\cdots$   $\qquad$       &$y_{(k+l)\big((t-1)(k+l)\big)}^{I}$
\end{tabular}
\! \! \! \! \right].$$
Note that all entries of this supermatrix are sections of $\mathcal{O}(M)$.
\\
Let $A^{I}$  be the matrix introduced in subsection \ref{grassmannian} and let $x_{ij}^{I}$ be its entry out of $M_{I}(A^{I})=id_{I}$. Then the correspondence $x_{ij}^{I} \longmapsto y_{ij}^{I}$ defines a homomorphism 
\begin{equation*}
\varphi_{I}^{*}: \mathcal{G}_{I}(V_{I}) \longrightarrow \mathcal{O}(M).
\end{equation*}
Now, for each global section $h$ of $\mathcal{G}$, one may define 
\begin{equation*}
\widetilde{h} = \sum_{\substack{I \subseteq \{1, \cdots, t(k+l)\},\\ |I|=k+l}} \varphi_{I}^{*}( \rho_{I}^{\prime}.h|_{V_{I}}).
\end{equation*}
Then the correspondence 
\begin{equation}\label{sigma}
\sigma^*: h \longmapsto \widetilde{h}
\end{equation}
is a  well-defined homomorphism from $\mathcal{G}(Gr_{k}^{tk} \times Gr_{l}^{tl})$ to $\mathcal{O}(M)$ and so induces a smooth map $\widetilde{\sigma}$, from $M$ to $Gr_{k}^{tk} \times Gr_{l}^{tl}$ \cite{Roshandel}.
\end{proof}
The homomorphism $\sigma=(\widetilde{\sigma},\sigma^*)$ is called the associated morphism with the Gauss morphism $g$.
\subsection{Pullback of the canonical super vector bundle}
%In this section, we show that $\mathcal{E}$ and the pullback of $\Gamma$ (the canonical super vector bundle over $_{\nu}Gr$) under $\sigma$ are isomorphic.
\begin{definition}
Let  $\sigma=(\widetilde{\sigma},\sigma^*)$  is a morphism from a supermanifold  $(M,\mathcal{O}) $  to  $ Gr_{k|l}^{m|n} $. One can define a  $\mathcal{G}$-module structure on  $\mathcal{O}(M)$ as follows:
\begin{equation*}
a * b:= \sigma^*(a).b, \qquad a \in \mathcal{G}(Gr_{k}^{m} \times Gr_{l}^{n}), \qquad b \in \mathcal{O}(M).
\end{equation*}
The sheaf $\mathcal{O} \otimes _{\mathcal{G}}^{\sigma} \Gamma$  is called the structure sheaf of the pullback of  $ \gamma_{k|l}^{m|n}$ along  $\sigma$ .
\end{definition}
\begin{theorem}\label{sigms iso}
	Let $\sigma$ be the associated morphism introduced above\eqref{sigma}. Then, the super vector bundle $\mathcal{E}$ and the pullback of $\Gamma$ (the canonical super vector bundle over $_{\nu}Gr$) along $\sigma$ are isomorphic.
\end{theorem}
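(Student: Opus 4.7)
The strategy is to construct the isomorphism first on the cover $\{W_I:=\widetilde\sigma^{-1}(V_I)\}$ of $M$, then to glue the local maps. On $W_I$, since $\Gamma|_{V_I}=\Gamma_I=\mathcal{G}_I\otimes_{\mathbb{R}}\mathbb{R}^{k|l}$ is free of rank $k|l$ over $\mathcal{G}_I$, the pullback $(\mathcal{O}\otimes^{\sigma}_\mathcal{G}\Gamma)|_{W_I}$ is the free $\mathcal{O}|_{W_I}$-module with basis $\{1\otimes e_i,\,1\otimes f_j\}$. On the $\mathcal{E}$ side, the triviality of $\ker(g)$ combined with the fact that $\widetilde\sigma(x)=\mathrm{Im}(g_x)\in V_I$ implies that the projection of $\mathrm{Im}(g)$ onto the $I$-indexed coordinates of the trivial rank-$tk|tl$ bundle $\mathcal{O}^{tk|tl}$ is an isomorphism over $W_I$. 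This provides a local basis $\{\varepsilon_\mu^I\}_{\mu=1}^{k+l}$ of $\mathcal{E}(W_I)$, namely the preimages under this projection of the pivoted standard basis, with $id_I$ handling the parity.

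With these bases one defines
\[
\Phi_I\colon \bigl(\mathcal{O}\otimes^{\sigma}_\mathcal{G}\Gamma\bigr)\big|_{W_I}\longrightarrow \mathcal{E}|_{W_I},\qquad 1\otimes e_i\longmapsto \varepsilon_i^I,\qquad 1\otimes f_j\longmapsto \varepsilon_{k+j}^I,
\]
extended $\mathcal{O}|_{W_I}$-linearly. This is a local isomorphism since it sends a basis to a basis, and the global isomorphism is obtained by gluing provided $\Phi_I=\Phi_J$ on each overlap $W_I\cap W_J$. Verifying the latter amounts to matching two transition supermatrices: on the $\Gamma$ side, $\psi^*_{IJ}$ is given in Section~2.1 by the matrix $m=(M_J(A^I)\cdot id_J)^{-1}$ with entries in $\mathcal{G}_I(V_{IJ})$, which pulled back via $\sigma^*$ becomes the transition of $\sigma^*(\Gamma)$; on the $\mathcal{E}$ side, the change of basis between $\{\varepsilon_\mu^I\}$ and $\{\varepsilon_\mu^J\}$ is the change of pivot in the Gauss supermatrix $G$, which by Theorem~\ref{Sigma}'s assignment $x^I_{ij}\mapsto y^I_{ij}$ is precisely $\sigma^*(m)$.

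The principal obstacle is this last bookkeeping step: one must carefully unwind the construction of $A^I$, the pivoting by $id_I$, and the parity adjustments encoded by $\nu$ in order to confirm the equality of two \emph{a priori} different matrices. A useful conceptual shortcut is to note that, by the very construction of $\widetilde\sigma$, both $\mathrm{Im}(g)$ and $\sigma^*(\Gamma)$ — the latter viewed via the pullback of the tautological inclusion of $\Gamma$ into the trivial rank-$tk|tl$ bundle over $_{\nu}Gr$ — have the same fibre at every $x\in M$, so they coincide as subsheaves of $\mathcal{O}^{tk|tl}$; combined with the isomorphism $g\colon\mathcal{E}\xrightarrow{\sim}\mathrm{Im}(g)$ induced by the kernel-freeness of $g$, this yields the desired isomorphism directly.
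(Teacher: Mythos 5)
Your proof is essentially correct but runs in the opposite direction from the paper's. The paper first writes down a single \emph{global} morphism $\mathcal{O}\otimes^{\sigma}_{\mathcal{G}}\Gamma\to\mathcal{E}$, $u\otimes s'\mapsto u\cdot\delta(s')$, using a partition of unity $\{\rho'_I\}$ on $_{\nu}Gr$ to decompose $s'$ and sending the generators $s'^{I}_{j}$ of $\Gamma(V_I)$ to the sections $s^{I}_{j}$ of $\mathcal{E}$ indexed by the rows of $G(I)$; it then observes that over small open sets both sheaves are free of rank $k|l$ and the map is locally of the form $\mathcal{O}\otimes\mathcal{G}\otimes\mathbb{R}^{k|l}\to\mathcal{O}\otimes\mathbb{R}^{k|l}$, hence an isomorphism. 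You instead build local isomorphisms $\Phi_I$ over $W_I=\widetilde\sigma^{-1}(V_I)$ by matching bases, and then must verify the cocycle condition $\Phi_I=\Phi_J$ on overlaps by comparing $\sigma^*\bigl((M_J(A^I)\cdot id_J)^{-1}\bigr)$ with the change of pivot in $G$. The two routes carry the same mathematical content — in both, the crux is that the correspondence $x^{I}_{ij}\mapsto y^{I}_{ij}$ from Theorem~\ref{Sigma} intertwines the transition data of $\Gamma$ with that of $\mathcal{E}$ — but the paper's global construction sidesteps an explicit cocycle check (at the cost of having to verify well-definedness of $\delta$, which it also leaves implicit), whereas yours isolates that check as the single thing to prove. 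Neither write-up fully carries out its respective verification, so your sketch is no less complete than the paper's. One caution: your ``conceptual shortcut'' — that $\mathrm{Im}(g)$ and $\sigma^*\Gamma$ coincide as subsheaves of $\mathcal{O}^{tk|tl}$ because they have the same fibre at every point — is not sound as stated in the super setting: because of the nilpotent (odd) part of $\mathcal{O}$, two locally free subsheaves of a trivial module can agree fibrewise (i.e.\ modulo the maximal ideal at each point) without being equal as submodules, so this argument needs to be replaced by the section-level transition comparison you describe in the main line of your proof.
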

\begin{proof}
We show that the sheaf $\mathcal{O} \otimes _{\mathcal{G}}^{\sigma} \Gamma$ is isomorphic to $\mathcal{E}$. Let $s^{\prime}$ be a global section on $\Gamma$. One has
\begin{equation*}
s^{\prime}= \sum_{\substack{I \subseteq \{1, \cdots, t(k+l)\},\\ |I|=k+l}} \rho_{I}^{\prime}. r_{I}^{\prime}(s^{\prime}),
\end{equation*}
where $\{\rho_{I}^{\prime}\}$ is the partition of unity of $_{\nu}Gr$ subordinate to the open cover $\{V_{I}\}$, and $r_{I}^{\prime}$ is the restriction morphism giving sections over $V_{I}$. On the other hand, one may write each section $r_{I}^{\prime}(s^{\prime})$ as below:
\begin{equation*}
r_{I}^{\prime}(s^{\prime}) = \sum_{j=1}^{k+l} h_{j}^{I} s_{j}^{\prime^{I}},
\end{equation*}
where $s_{j}^{\prime^{I}}$ are generators of $\Gamma(V_{I})$ and the coefficients $h_{j}^{I}$ are the sections of $\mathcal{G}_{I}$. Therefore, we can write 
\begin{equation*}
s^{\prime}= \sum_{\substack{I \subseteq \{1, \cdots, t(k+l)\},\\ |I|=k+l}} \sum_{j=1}^{k+l} (\rho_{I}^{\prime}h_{j}^{I})s_{j}^{\prime^{I}}.
\end{equation*}
Note that each row of $G$ is in correspondence with a section in the generator set $A$. So there is a morphism from the pullback of $\Gamma$ to $\mathcal{E}$ as
\begin{align}\label{sigmatensor}
&\mathcal{O} \otimes_{\mathcal{G}}^{\sigma} \Gamma \longrightarrow \mathcal{E}, \nonumber
\\
&u \otimes s^{\prime} \longmapsto u.\delta(s^{\prime}),
\end{align}
where $\delta(s^{\prime})$ is 
\begin{equation*}
\sum_{\substack{I \subseteq \{1, \cdots, t(k+l)\},\\ |I|=k+l}} \sum_{\substack{j=1}}^{k+l} \sigma^*(\rho_{I}^{\prime}h_{j}^{I}). s_{j}^{I}
\end{equation*}
and $s_{j}^{I}$ is the section corresponding to the $j$-th row of $G(I)$ (cf. subsection \ref{Gauss sup}).
\\
One may show that the morphism in \eqref{sigmatensor} is an isomorphism. To this end, first note that every locally isomorphism between two sheaves of $\mathcal{O}$-modules with the same rank is a globally isomorphism. Also for the super vector bundle $\Gamma$ of rank $k|l$ over $\mathcal{G}$, one can write a locally isomorphism
\begin{equation*}
\mathcal{O} \otimes_{\mathcal{G}}^{\sigma} \Gamma \overset{\simeq}{\longrightarrow} \mathcal{O} \otimes_{\mathbb{R}}\mathbb{R}^{k|l},
\end{equation*}
because for each sufficiently small open set $V$ in $Gr_{k}^{tk} \times Gr_{l}^{tl}$ one can write
\begin{equation*}
\Gamma(V) \simeq \mathcal{G}(V) \otimes_{\mathbb{R}}\mathbb{R}^{k|l} 
\end{equation*}
and then
\begin{equation*}
\mathcal{O} \big(\widetilde{\sigma}^{-1}(V)\big) \otimes_{\mathcal{G}}^{\sigma} \Gamma(V) \simeq \mathcal{O}\big(\widetilde{\sigma}^{-1}(V)\big) \otimes_{\mathcal{G}}^{\sigma} \mathcal{G}(V) \otimes_{\mathbb{R}} \mathbb{R}^{k|l}.
\end{equation*}
This shows that the morphism in \eqref{sigmatensor} may be represented locally by the following isomorphism:
\begin{equation}
\mathcal{O} \otimes \mathcal{G}\otimes \mathbb{R}^{k|l} \to \mathcal{O}\otimes \mathbb{R}^{k|l}.
\end{equation}
Thus \eqref{sigmatensor} defines a global isomorphism.
\end{proof} 
\subsection{Homotopy properties of Gauss supermaps and their associated morphisms}

Let $\mathcal{O}_{M}^{m|n}$ and $\mathcal{O}_{M}^{m^{\prime}|n^{\prime}}$ be two trivial super vector bundles where $m^{\prime}=2m-k$, $n^{\prime}=2n-l$; then, one can write the inclusion homomorphisms
\begin{equation*}
J^{e}, J^{o}, J: \mathcal{O}(M) \otimes_{\mathbb{R}} \mathbb{R}^{m|n} \longrightarrow \mathcal{O}(M) \otimes_{\mathbb{R}} \mathbb{R}^{2m|2n}
\end{equation*}
by the conditions
\begin{align*}
J^{e}: &1 \otimes e_{i} \longmapsto 1 \otimes e_{2i},               &J^{o}: 1 \otimes e_{i}  \longmapsto 1 \otimes e_{2i-1}, &\qquad \quad J: 1 \otimes e_{i} \longmapsto 1 \otimes e_{i}, \\
&1 \otimes f_{j} \longmapsto 1 \otimes f_{2j},                & 1 \otimes f_{j} \longmapsto 1 \otimes f_{2j-1},             &\qquad \qquad \quad 1 \otimes f_{j} \longmapsto 1 \otimes f_{j}.
\end{align*}
Now, let $(V_I, \mathcal{G}_I)$ be $\nu$-domains introduced in subsection \ref{grassmannian}. In addition assume $(W_J, \mathcal{G}_J)$ be $\nu$-domains of dimension $2p|2q$. For each $k|l$ multi-index $I=\{i_{1}, \cdots, i_{k+l}\} \subset \{1, ..., m+n\}$, one can associate the following multi-indices
\begin{equation*}
I^{e}:=\{2i_{1}, \cdots, 2i_{k+l}\},
\end{equation*}
\begin{equation*}
I^{o}:=\{2i_{1}-1, \cdots, 2i_{k+l}-1\},
\end{equation*}
\begin{equation*}
\bar{I}:=\{i_{1}, \cdots, i_{b}, i_{b+1}+m-k, \cdots, i_{k+l}+m-k\},
\end{equation*}
where $i_a \in I$ ,$1 \leq i_a \leq k+l$, and $i_b$ is an element of $I$ for which $i_b \leq m \leq i_{b+ 1}$.
\\
So the maps $J^{e}$, $J^{o}$ and $J$ induce the homomorphisms
\begin{equation*}
\bar{J}^{e}, \bar{J}^{o}, \bar{J}: _{\nu}Gr(m|n) \longrightarrow _{\nu}Gr(m^{\prime}|n^{\prime}).
\end{equation*}
In fact, $(\bar{J}^{e})^*|_{W_{I^{e}}}$ is obtained by
\begin{align*}
&\mathcal{G}_{I^{e}}(W_{I^{e}}) \longrightarrow \mathcal{G}_{I}(V_{I}),
\\
&\left\{
\begin{array}{rl}
y_{i(2j-1)} \longmapsto x_{ij}, \quad i=1, \cdots, k+l, \quad j=1, \cdots, m+n-k-l,
\\
\text{other generators} \longmapsto 0. \hspace{6cm}
\end{array}\right.
\end{align*}
\begin{theorem}
	Let $f, f_{1}:(M, \mathcal{O}) \longrightarrow _{\nu}Gr(m|n)$ are induced by the Gauss supermaps $g$ and $g_{1}$. Then, $\bar{J}f$ and $\bar{J}f_{1}$ induced by $Jg$ and $Jg_{1}$ are homotopic.
\end{theorem}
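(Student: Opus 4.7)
The plan is to exhibit an explicit homotopy of Gauss supermorphisms connecting $Jg$ and $Jg_{1}$, then use Theorem~\ref{Sigma}, applied to a family of Gauss morphisms over $M \times \mathbb{R}^{1|0}$, to transport this homotopy into a homotopy of maps to ${}_{\nu}Gr(m'|n')$. The argument proceeds in two interpolations: a ``linear'' one bridging $\bar{J}^{e}f$ and $\bar{J}^{o}f_{1}$, and two ``reindexing'' ones joining the endpoints $\bar{J}^{e}f \simeq \bar{J}f$ and $\bar{J}^{o}f_{1} \simeq \bar{J}f_{1}$.

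First I would set up the bridge. Because $J^{e}$ and $J^{o}$ land in coordinate sectors of $\mathcal{O}_{M}^{m'|n'}$ with disjoint basis indices (even-indexed and odd-indexed positions), the one-parameter family
\begin{equation*}
H_{t} \;:=\; (1-t)\,J^{e}\circ g \;+\; t\,J^{o}\circ g_{1}, \qquad t \in [0,1],
\end{equation*}
has trivial kernel at every $t$: if a local section $s$ satisfies $H_{t}(s) = 0$, then the two index-disjoint summands vanish separately, yielding either $t \in (0,1)$ with $g(s) = g_{1}(s) = 0$, or $t \in \{0,1\}$ with the surviving summand killing $s$; both alternatives contradict the Gauss property of $g$ and $g_{1}$. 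Viewing $t$ as the even coordinate on $\mathbb{R}^{1|0}$, the coefficients $1-t$ and $t$ are even sections of the structure sheaf of $M\times\mathbb{R}^{1|0}$, and $H$ becomes a Gauss morphism of the pullback of $\mathcal{E}$ to $M\times\mathbb{R}^{1|0}$. Theorem~\ref{Sigma} then produces an associated morphism $\Sigma : M\times\mathbb{R}^{1|0} \longrightarrow {}_{\nu}Gr(m'|n')$ whose restrictions at $t=0$ and $t=1$ coincide with the morphisms induced by $J^{e}g$ and $J^{o}g_{1}$, namely $\bar{J}^{e}f$ and $\bar{J}^{o}f_{1}$.

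Next I would close the gap between $\bar{J}f$ and $\bar{J}^{e}f$ (and symmetrically between $\bar{J}^{o}f_{1}$ and $\bar{J}f_{1}$) by running entirely analogous interpolations inside $\mathcal{O}_{M}^{m'|n'}$. The embeddings $J$, $J^{e}$, $J^{o}$ differ only in which coordinate slots of $\mathbb{R}^{m'|n'}$ they fill, and the dimension choice $m' = 2m-k$, $n' = 2n-l$ provides precisely enough ambient room for a continuous path of linear monomorphisms $\mathbb{R}^{m|n}\hookrightarrow\mathbb{R}^{m'|n'}$ interpolating between any two such placements without ever collapsing the rank. Composing the path with $g$ (respectively $g_{1}$), one obtains families of Gauss supermorphisms and, via Theorem~\ref{Sigma}, homotopies $\bar{J}f \simeq \bar{J}^{e}f$ and $\bar{J}^{o}f_{1} \simeq \bar{J}f_{1}$. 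Concatenating the three homotopies gives $\bar{J}f \simeq \bar{J}f_{1}$.

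The principal technical obstacle is making the ``linear combination'' $H_{t}$ rigorous at the sheaf level: the scalars $(1-t)$ and $t$ must be introduced as genuine even global sections of the base structure sheaf, and triviality of $\ker H$ must be verified as a statement about the Gauss supermatrix of $H$ having maximal rank as a matrix over $\mathcal{O}(M\times\mathbb{R}^{1|0})$, not merely on the underlying topological fibers. Once this pointwise-versus-sheaf distinction is handled, the index-disjointness of $J^{e}$ and $J^{o}$ reduces the check to an elementary parity/scalar argument, and the rest of the proof is a routine concatenation of Gauss-morphism families translated through the associated-morphism construction of Theorem~\ref{Sigma}.
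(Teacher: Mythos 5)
Your core construction is exactly the paper's: the paper defines $F_{t}(\varphi)=(1-t)(J^{e}g)(\varphi)+t(J^{o}g_{1})(\varphi)$ and invokes the associated-morphism construction of Theorem~\ref{Sigma} to get a family $\bar{F}_{t}$ with $\bar{F}_{0}=\bar{J}^{e}f$ and $\bar{F}_{1}=\bar{J}^{o}f_{1}$. Where you go beyond the paper is in two places, both of which are genuinely needed. First, you verify that $F_{t}$ has trivial kernel for every $t$ via the index-disjointness of the images of $J^{e}$ and $J^{o}$ (and you correctly flag that this must be argued at the sheaf level, e.g.\ on the cover where $1-t$ or $t$ is invertible, rather than fiberwise); the paper asserts without comment that a family of morphisms to ${}_{\nu}Gr(m'|n')$ is induced. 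Second, and more importantly, the paper's proof only produces a homotopy from $\bar{J}^{e}f$ to $\bar{J}^{o}f_{1}$ and then simply declares this to be ``a homotopy from $\bar{J}f$ to $\bar{J}f_{1}$''; the connecting homotopies $\bar{J}f\simeq\bar{J}^{e}f$ and $\bar{J}^{o}f_{1}\simeq\bar{J}f_{1}$ are never constructed. Your second paragraph supplies precisely these, by interpolating between the different coordinate placements $J$, $J^{e}$, $J^{o}$ inside $\mathbb{R}^{m'|n'}$ and concatenating, which is the standard completion of this argument in the classical (Husemoller) setting. So your proposal is the same approach, but it is the complete version of an argument the paper leaves partially implicit.
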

\begin{proof}
Consider the homomorphisms $J^{e}g$ and $J^{o}g_{1}$, with the induced homomorphisms $\bar{J}^{e}f$ and $\bar{J}^{o}f_1$. One can define a family of homomorphisms
\begin{align*}
&F_{t}: \mathcal{E}(M) \longrightarrow \mathcal{O} \otimes_{\mathbb{R}} \mathbb{R}^{2m|2n},
\\
&\varphi \longmapsto (1-t).(J^{e}g)(\varphi)+t.(J^{o}g_{1})(\varphi),
\end{align*}
where $F_{0}=J^{e}g$ and $F_{1}=J^{o}g_{1}$. By subsection \ref{Gauss sup}, a family of morphisms $\bar{F}_{t}$ from $(M,\mathcal{O})$ to $_{\nu}Gr(m^{\prime}|n^{\prime})$ are induced. Obviousely  $\bar{F}_{0}=\bar{J}^{e}f$ and $\bar{F}_{1}=\bar{J}^{o}f_{1}$; thus, $\bar{F}_{t}$ is a homotopy from $\bar{J}f$ to $\bar{J}f_1$.
\end{proof}
 \section{Universality}
At first we introduce the infinite Grassmannian $ Gr_k^{\infty} $ which is the  direct limit of finite dimensional Grassmannians $ Gr_k^m $.
\subsection{Grassmann Manifold $Gr_k^{\infty} $}\label{Gr-infty}
The Grassmann manifold,  $ Gr_k^n $, is a compact space containing all the k-dimensional vector subspaces in $ \mathbb{R}^n $. Each of these vector subspaces can be represented by a linearly independent $ k $-tuple basis $ {v_1,...,v_k} $. Any $ v_i $ is in the form of an ordered $ n $-tuple. With the help of inclusion mapping $\iota:\mathbb{R}^{n} \hookrightarrow \mathbb{R}^{n+1}$, these vectors can be embedded as $ (n+1) $-tuples $ \iota(v_i)= (v_{i1},...,v_{in},0) $ in $ \mathbb{R}^{n+1} $. Thus, a mapping like
\begin{equation*}
\iota:Gr_{k}^{n} \longrightarrow Gr_{k}^{n+1}
\end{equation*}
is induced, which is represented by  $ \iota $  for the sake of simplicity. Under this mapping  $ Gr_k^{n} $  can be considered as a subspace of  $ Gr_k^{n+1}  $. By standard coordinate map of a Grassmannian we mean coordinate maps which are introduced in [\cite{Manin}, page 9]. In this case, if $ (V'_I,x_1,...,x_{p+k})  $ for $ p=k(n-k) $ is a standard coordinate map of $ Gr_{k}^{n+1} $ corresponding to multi-indices $ I=\{i_1,...,i_k\}\subseteq\{1,...,n\} $, then  $ (V_I,x_1,...,x_p) $  for  $  \iota^{-1}(V'_I)=V_I $ is a standard coordinate map of $ Gr_k^{n} $.\\
From the sheaf-theoretic view, there is a sheaf $ (U_{I},\mathcal{O}_{I}):=(\mathbb{R}^p,\mathbf{C}^{\infty}_{\mathbb{R}^{p}}) $ for each coordinate map $ (V_I,x_1,...,x_p) $ that can be glued together through homomorphisms
\begin{equation}\label{phi}
\phi_{IJ}:(U_{IJ},\mathcal{O}_{J}|)\longrightarrow (U_{IJ},\mathcal{O}_{I}|)
\end{equation}
to construct $ Gr_k^{n} $.\\
In both view of points, by means of some $ \iota $, one may consider $ Gr^n_k $ as a subset of $ Gr^{n+1}_k $.\\
The union of all $ Gr_k^{n}$ for a constant $ k $ constitutes the infinite dimensional Grassmann manifold to which an inductive topology is described by means of a direct limit: 
\begin{equation*}
Gr_k^{\infty} =\bigcup_{k\leq n} Gr_k^n. 
\end{equation*}
Thus, each subset of  $ Gr_k^{\infty} $  is said to be open if its intersection with any $ Gr_k^{n} $ is an open set. Equivalently, in order to determine the topology of this set, a sequence of open sets like $ U_i\subseteq Gr_k^{k+i} $ where $(U_i)_{i\in \mathbb{N} \cup \{0\}}$   can be used such that for each i, we have
\begin{equation*}
\iota(U_i)\subseteq U_{i+1}.
\end{equation*}
In this case, on the union of a separated family of open sets $  U_i $s, consider the equivalence  $ \sim $  with the following criterion 
\begin{equation*}
a\sim b \leftrightarrow \iota({a})=b,
\end{equation*}
where  $ a\in U_i $  and  $ b\in U_{i+1} $. We define $ 	U^{\infty}_{\left( U_i\right) }:=\frac{\bigsqcup U_i}{\sim} $ as an open set in $ Gr_k^{\infty} $ and represent it abbreviately by $ U^\infty $ . These sets form a basis for the topology of  $ Gr_k^{\infty} $. Thus the following embedding can be defined for each i: 
\begin{align*}
p_i:&Gr_k^{k+i}\rightarrow Gr_k^{\infty},
\\&\quad a_i\mapsto[a_i].
\end{align*}
For the sake of simplicity, we represent the structure sheaf of $ Gr_k^{k+i} $  by $ \mathcal{O}_i $. 
\\
Now, using the inverse limit, a smooth sheaf structure can be defined over $ Gr_k^{\infty} $, which is denoted  by $ \underset{\leftarrow}{\mathcal{O}} $ . In fact, section $ f $ on the open set $ U^{\infty} $ is defined by the following rule:
\begin{equation*}
f\in \underset{\leftarrow}{\mathcal{O}}{(U^{\infty})}
\Longleftrightarrow f=(f_i)_i,\quad f_i\in \mathcal{O}_i(U_i),\quad \iota_{i}^*(f_{i+1})=f_i,
\end{equation*}
where, homorphisms $ \iota_{i}^*:\mathcal{O}_{i+1}\rightarrow \mathcal{O}_i $  are induced by mappings $ \iota_i:Gr_k^{k+i}\rightarrow Gr_k^{k+i+1} $ between the sheaves of the two manifolds. 
\\
Under this mapping, the structure sheaf $ \mathcal{O}_i $  enjoys a $ \mathcal{O}_{i+1} $-module structure.  
\begin{theorem}
	$ \underset{\leftarrow}{\mathcal{O}} $ is a sheaf.
\end{theorem}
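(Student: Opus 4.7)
The plan is to verify the two sheaf axioms (locality and gluing) by reducing them, level by level, to the fact that each structure sheaf $\mathcal{O}_i$ on the finite-dimensional Grassmannian $Gr_k^{k+i}$ is already known to be a sheaf. Throughout, I will exploit the fact that open sets of $Gr_k^\infty$ are, by construction, of the form $U^\infty=U^\infty_{(U_i)}$ for a compatible family $(U_i)$ with $\iota_i(U_i)\subseteq U_{i+1}$, and that the maps $\iota_i\colon Gr_k^{k+i}\to Gr_k^{k+i+1}$ are morphisms of ringed spaces, so that the pullbacks $\iota_i^{*}\colon \mathcal{O}_{i+1}\to \mathcal{O}_i$ commute with restriction to open subsets.

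First I would check that $\underset{\leftarrow}{\mathcal{O}}$ is a presheaf. Given $V^\infty\subseteq U^\infty$ with $V^\infty=U^\infty_{(V_i)}$, $V_i\subseteq U_i$ open, the candidate restriction sends $(f_i)_i$ to $(f_i|_{V_i})_i$. One must verify that this tuple still satisfies the compatibility condition $\iota_i^{*}(f_{i+1}|_{V_{i+1}})=f_i|_{V_i}$; this is immediate from the naturality of $\iota_i^{*}$ with respect to restriction, together with $\iota_i(V_i)\subseteq V_{i+1}$. Functoriality of restriction is inherited from functoriality at each level.

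Next, for locality, suppose $(f_i)_i\in\underset{\leftarrow}{\mathcal{O}}(U^\infty)$ restricts to zero on every member of an open cover $\{W^{\infty,\alpha}\}_\alpha$ of $U^\infty$, where $W^{\infty,\alpha}=U^\infty_{(W_i^\alpha)}$. By the definition of the direct-limit topology, for each fixed $i$ the family $\{W_i^\alpha\}_\alpha$ covers $U_i$, and by hypothesis $f_i|_{W_i^\alpha}=0$ for all $\alpha$. Since $\mathcal{O}_i$ is a sheaf, this forces $f_i=0$, hence $(f_i)_i=0$. For gluing, given matching sections $(f_i^\alpha)_i\in\underset{\leftarrow}{\mathcal{O}}(W^{\infty,\alpha})$ that agree on all pairwise intersections $W^{\infty,\alpha}\cap W^{\infty,\beta}$, the agreement descends level by level: at level $i$ the local sections $f_i^\alpha\in\mathcal{O}_i(W_i^\alpha)$ satisfy $f_i^\alpha|_{W_i^\alpha\cap W_i^\beta}=f_i^\beta|_{W_i^\alpha\cap W_i^\beta}$, so by the sheaf property of $\mathcal{O}_i$ they glue to a unique $f_i\in\mathcal{O}_i(U_i)$. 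It remains to check that the resulting tuple $(f_i)_i$ is compatible, i.e.\ $\iota_i^{*}(f_{i+1})=f_i$; both sides restrict to $\iota_i^{*}(f_{i+1}^\alpha)=f_i^\alpha$ on every $W_i^\alpha$, so by the just-proved locality (or by uniqueness of the glued section in $\mathcal{O}_i$) they coincide on $U_i$.

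The only delicate point, and the step I would most carefully justify, is the interaction between the direct-limit topology on $Gr_k^\infty$ and the description of arbitrary open covers in terms of level-wise data: one needs that any open cover $\{W^{\infty,\alpha}\}_\alpha$ of $U^\infty$ actually produces honest open covers $\{W_i^\alpha\}_\alpha$ of $U_i$ for every $i$, which follows from the definition that a set is open in $Gr_k^\infty$ precisely when its intersection with each $Gr_k^{k+i}$ is open. Once this bookkeeping is in place, both sheaf axioms for $\underset{\leftarrow}{\mathcal{O}}$ are immediate consequences of the corresponding axioms for the $\mathcal{O}_i$, and naturality of the pullbacks $\iota_i^{*}$ handles the compatibility condition built into the inverse limit.
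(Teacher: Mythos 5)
Your proposal is correct and follows essentially the same route as the paper: both reduce the presheaf, locality, and gluing axioms level by level to the corresponding properties of the sheaves $\mathcal{O}_i$ on the finite-dimensional Grassmannians $Gr_k^{k+i}$. You are in fact more careful than the paper at the one genuinely delicate point, namely verifying that the glued tuple $(f_i)_i$ still satisfies the inverse-limit compatibility $\iota_i^{*}(f_{i+1})=f_i$, a step the paper's own argument passes over in silence.
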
 
\begin{proof}
	For each two open sets $ U^{\infty} $ and $ V^{\infty} $, we have: 
	\begin{equation*}
	U^{\infty}\subseteq V^{\infty} \Longleftrightarrow U_i\subseteq V_i \qquad \forall i\in \mathbb{N}\cup \{0\}.
	\end{equation*}
	In this case, the restriction homomorphism can be defined as follows: 
	\begin{equation*}
	r_{( U_i)( V_i)}(f):=(r_{U_iV_i}(f_i))_i.
	\end{equation*}
	With this definition, it is easy to see that for all three open sets $ U^{\infty} $, $ V^{\infty} $, and $ W^{\infty} $, if   $ U^{\infty}\subseteq V^{\infty}\subseteq W^{\infty} $, then the following relationships are established: 
	\begin{align*}
	r_{U^\infty V^\infty}\circ r_{V^\infty W^\infty}&=r_{U^\infty W^\infty},\\
	r_{U^\infty U^\infty}&=id_{\underset{\leftarrow}{\mathcal{O}}{}_{U^\infty}}.
    \end{align*}
	Therefore, $ \underset{\leftarrow}{\mathcal{O}} $  is a pre-sheaf. To prove that it is a sheaf, it suffices to examine the local and global properties of sheaves. Consider two sections $ f $ and $ g $ on  $ Gr_k^{\infty} $ so that for any arbitrary point in the base space, they are equal in a neighborhood of that point. We know these sections are in the form of a sequence of sections $f=(f_i)  $  and  $ g=(g_i) $ , so for each $ i $ and any arbitrary point in  $ Gr_k^{k+i} $ , the sections $ f_i $ and $ g_i $ are equal in a neighborhood $ U_i $ of that point. Since $ Gr_k^{k+i} $  is a sheaf, according to the local property of sheaves, the sections $ f_i $ and $ g_i $ are equal on the whole $ Gr_k^{k+i} $. As a result, $ f $ and $ g $ are equal.\\
	Now consider an open cover  $\{U^{\infty}_{\alpha}\} $  for  $ Gr_k^{\infty} $  and sections  $f_\alpha\in \underset{\leftarrow}{\mathcal{O}}{} _{\alpha} $ so that for each  $ \alpha $  and  $ \beta $, we have
	\begin{equation*}
	r_{\alpha\beta,\beta}(f_\beta)=r_{\alpha\beta,\alpha}(f_\alpha),
	\end{equation*}
	where  $ r_{(U^{\infty}_{\alpha}\cap U^{\infty}_{\beta})U^{\infty}_{\beta}} $ are abbreviately represented by $ r_{\alpha\beta,\beta} $. Since each  $ f_\alpha $  is in the form of a sequence of sections like  $ f_{\alpha i} $, so for each $ i $, there is a section $ f_i $ and a restriction $ r_{\alpha i} $ in $ Gr_k^{k+i} $ to $ U_{\alpha i} $ such that  $ r_{\alpha i}(f_i)=f_{\alpha i} $. Therefore,  $ f=(f_i)_i $ is a section for which for each $ \alpha  $  we have 
	\begin{equation*}
	r_\alpha(f)=f_\alpha,
	\end{equation*}
	indicating that the global property for the pre-sheaf $  \underset{\leftarrow}{\mathcal{O}} $ is satisfied. 
\end{proof}	
Considering the structure defined, naturally a homomorphism from a finite-dimensional Grassmannian to an infinite-dimensional Grassmannian is induced by the embedding $ p_i $: 
\begin{equation*}
p_i^*:\underset{\leftarrow}{\mathcal{O}}\rightarrow \mathcal{O}_i
,\quad p_i^*((f_j))=f_i.
\end{equation*}
Under this mapping, the structure sheaf $ \mathcal{O}_i $  also has a $  \underset{\leftarrow}{\mathcal{O}} $-module structure.
\subsection{$\nu$-Grassmannian  $ _\nu Gr_{k|l}^{\infty} $}
To construct the $\nu$-Grassmannian $  _\nu Gr_{k|l}^{\infty}:=(Gr_k^\infty\times Gr_l^\infty , \underset{\leftarrow}{\mathcal{G}}) $, similar to the previous subsection, first its base space, i.e., $ Gr_k^{\infty}\times Gr_l^{\infty} $ is determined using the direct limit and then its sheaf structure is determined using the inverse limit. This requires that a natural homomorphism between the two $\nu$-Grassmannians   $ _\nu Gr_{k|l}^{m|n} $  and  $ _\nu Gr_{k|l}^{m'|n'} $  for  $ m\leq m' $  and  $ n\leq n' $  are examined.
\\ 
Firstly, since the structure of the base spaces is multiplicative, the mapping 
\begin{equation*}
\iota: Gr_k^m\times Gr_l^n\rightarrow  Gr_k^{m'}\times Gr_l^{n'}
\end{equation*}
is established between them.
\\
Secondly, note that the  $\nu$-Grassmannian  $ _\nu Gr_{k|l}^{m'|n'} $  is  constructed by gluing together standard  $\nu$-domains of dimension $ p'|q' $ where $ p'=p+k(m'-m)+l(n'-n) $ and $ q'=q+k(n'-n)+l(m'-m) $.
Similar to the normal case, if $ V'_I\cap \iota(Gr_k^m\times Gr_l^n) $ is nonempty for an arbitrary multi-index $ I $, then $ \iota^{-1}(V'_I) $ will be equal to $ V_I $.
In this case the correspondence 
\begin{align*}
&\mathcal{G'}_{I}(V'_{I}) \longrightarrow \mathcal{G}_{I}(V_{I}) \left\{
\begin{array}{rl}
&x_i\longmapsto x_i,\quad i=1,\cdots,p,\\
&e_i \longmapsto e_i,\quad i=1,\cdots,q,\qquad I\subseteq \{1,...,m\}\cup\{m'+1,...,m'+n\},\\
&\text{other generators} \longmapsto 0\hspace{0cm},
\end{array} \right.\\
&\mathcal{G}'_{I}(V'_{I}) \longrightarrow 0, \qquad\qquad\qquad\qquad\qquad\qquad\qquad\qquad I\nsubseteq \{1,...,m\}\cup\{m'+1,...,m'+n\}.
\end{align*}
induces a natural homomorphism as
\begin{equation}
\iota_I^*:\mathcal{G}'_I|({V'_{IJ})}\longrightarrow\mathcal{G}_I|({V_{IJ}})\label{a2}.
\end{equation}
Under this mapping, the structure sheaf   $ \mathcal{G}_I $  has a $ \mathcal{G}'_I $-module structure as well.
\\ 
It can be seen that the following diagram consisting of this homomorphism and the gluing maps between the  $ \nu $-domains for arbitrary and permissible multi-indices $ I $ and $ J $ is commutative;
\\
$$ \! \! \!
\begin{tabular}{ccc}
$\mathcal{G}'_J|(V'_{JI})$   &  $\stackrel{\iota_J^*}{\longrightarrow}  $  &   $\mathcal{G}_J|(V_{JI})$\\
${\varphi'}_{IJ}^* \downarrow $   &    &      $\varphi_{IJ}^* \downarrow $       \\
$\mathcal{G}'_I|(V'_{IJ})$   &   $ \stackrel{\iota_I^*}{\longrightarrow} $  &   $\mathcal{G}_I|(V_{IJ})$
\end{tabular}
\! \! \! $$
because the gluing maps  $ \varphi^*_{IJ} $  and $ {\varphi'}^*_{IJ} $  are exactly constructed using the invertible square supermatrices   $ M_J(A^I).id_J $ and  $ M'_J(A'^I).id_J $  of dimensions $ {k|l}\times {k|l} $  that are equal to each other and, on the other hand, the orders of the even generators $ x_1,...,x_p $ and the odd generators $ e_1,...,e_q $ do not change in the same columns in the two supermatrices. 
\\
Actually, in the representation supermatrix of  $\nu$-domain $\mathcal{G}'_J|_{V'_{JI}}$, there are columns like  $[^{[a']}_{[c']}]  $  and $ [^{[b']}_{[d']}] $  that do not present in the representation supermatrix  $\mathcal{G}_J|_{V_{JI}}$, and the other columns are same: 
$$A^J:=\left[ \! \! \!
\begin{tabular}{ccc}
$[A]_{k \times m}$   &  $\vline$  & $[B]_{k\times n}$\\
---  ---  ---             &  $\vline$  & ---  ---  ---          \\
$[C]_{l \times m}$   &   $\vline$  & $[D]_{l \times n}$
\end{tabular}
\! \! \! \right] ,\quad A'^J:= \left[ \! \! \!
\begin{tabular}{ccc}
$[A]_{k \times m}  [a']_{k \times {(m'-m)}}$  &  $\vline$  & $[B]_{k\times n}  [b']_{k \times {(n'-n)}}$ \\
---  ---  ---  ---  ---  ---  &  $\vline$  & ---  ---  ---  ---  ---  --- \\
$[C]_{l \times m}  [c']_{l \times (m'-m)}$   &   $\vline$  & $[D]_{l \times n}  [d']_{l \times (n'-n)}$
\end{tabular}
\! \! \! \right].$$
These columns have no effect on the gluing mappings in the above diagram because these mappings are derived based on the minors whose columns in the both supermatrices are the same. As a result, the above diagram commutes.
\\
The homomorphisms $ \iota_I^* $  induce the following natural homomorphisms by the induced equivalences $ {\varphi}_{IJ}^* $  and  $ {\varphi'}_{IJ}^* $, 
\begin{equation*}
\iota^*_{ii'|jj'}
:\mathcal{G}'(Gr_k^{m'}\times Gr_l^{n'})\rightarrow \mathcal{G}(Gr_k^m\times Gr_l^n) ,\qquad  m\leq m',\quad  n\leq n',
\end{equation*}
where,
\begin{equation*}
i:=m-k,\quad i':=m'-k,\quad  j:=n-l,\quad j':=n'-l.
\end{equation*}
The collection of all these structure sheaves with natural homomorphisms $  (\mathcal{G}'(Gr_k^{m'}\times Gr_l^{n'}),\iota^*_{ii'|jj'} ) $ satisfy the inverse limit conditions and determine a sheaf on $ Gr_k^\infty\times Gr_l^\infty$ represented by $ \underset{\leftarrow}{\mathcal{G}} $ .
In fact, every section $ f $ on an arbitrary open set $ U^{\infty}:=(U_{ij})_{i,j} $ in the base space is denoted by a sequence of sections $ (f_{ij})_{i,j} $  satisfying the following relationships: 
\begin{equation*}
f\in \underset{\leftarrow}{\mathcal{G}}(U^{\infty}) \Longleftrightarrow f=(f_{ij})_{i,j},\quad f_{ij}\in \mathcal{G}_{U_{ij}}(U_{ij}),\quad \iota_{ii'|jj'}^*(f_{i'j'})=f_{ij}.
\end{equation*}
According to the structure defined and similar to the normal case, a homomorphism from a finite dimensional  $\nu$-Grassmannian to an infinite dimensional  $\nu$-Grassmannian can be defined as follows:
\begin{equation*}
p_{ij}: {_\nu}Gr_{k|l}^{m|n}\rightarrow     {_\nu}Gr_{k|l}^{\infty},\quad p_{ij}^*((f_{rs}))=f_{ij}.
\end{equation*}
Under this mapping, the structure sheaf $ \mathcal{G}_{ij} $  has a $  \underset{\leftarrow}{\mathcal{G}} $-module structure as well.
\subsection{Super Vector Bundle $ \gamma_{k|l}^{\infty} $}
It is reminded that  $\nu$-Grassmannians are constructed by gluing together  $\nu$-domains $ (V_I,\mathcal{G}_I):=(\mathbb{R}^p,\mathbf{C}^{\infty}_{\mathbb{R}^{p}} \otimes_{\mathbb{R}} \wedge \mathbb{R}^{q}) $ in the direction of homomorphisms  $\varphi_{IJ}$.  Also, the canonical super vector bundle $ \gamma_{k|l}^{m|n}:=(Gr_k^m\times Gr_l^n,\Gamma)  $ over  $ _\nu Gr_{k|l}^{m|n} $ is constructed by gluing the free $ \mathcal{G}_I $-module sheaves $\mathcal{G}_I\otimes<A^I> := \mathcal{G}_I\otimes<A^I_1,...,A^I_{k+l}> $  on $ V_I $ in the direction of the  $ \mathcal{G} $-module homomorphisms:
\begin{align*}
{\psi}_{IJ}^*:\mathcal{G}_J|(V_{JI})\otimes <A^J>|_{JI}   \longrightarrow
\mathcal{G}_I|(V_{IJ})\otimes <A^I>|_{JI},\\
s\otimes<A^J> \mapsto\varphi^*_{IJ}(s)\otimes<(M_J(A^I).id_J)^{-1}.A^I>,
\end{align*}
where $ A_t^I $ is the $ t $-th row of the supermatrix $ A^I $ and $ <A^I_1, ...,A^I_{k+ l}>  $ is the real vector space generated by $ A_t^I $s. It can be seen that each of these free sheaves are equivalent to  $ (V_I,\mathcal{G}_I\otimes\mathbb{R}^{{k|l}}) $.
\\
On the other hand, the canonical homomorphism in  \eqref{a2} induces a canonical homomorphism between  $ \mathcal{G}'_I $-module sheaves in the form of
\begin{align*}
\bar{\iota}^*:\mathcal{G}'_I|({V'_{I})}\otimes <A'^I>&|_{I}\longrightarrow\mathcal{G}_I|({V_{I}})\otimes <A^I>|_{I},\\
&A'^I_t\mapsto A^I_t.
\end{align*}
It can be shown that the following diagram consisting of these homomorphisms are commutative with the gluing homomorphisms:
\begin{equation}
\begin{tabular}{ccc}\label{a4}
$\mathcal{G}'_J|(V'_{JI})\otimes <A'^J>|_{JI}$   &  $\stackrel{\bar{\iota}^*}{\longrightarrow}$  & $\mathcal{G}_J|(V_{JI})\otimes <A^J>|_{JI}$\\
${\psi'}_{IJ}^* \downarrow $   &    &   $\psi_{IJ}^* \downarrow $        \\
$\mathcal{G}'_I|(V'_{IJ})\otimes <A'^I>|_{IJ}$   &   $ \stackrel{\bar{\iota}^*}{\longrightarrow} $  & $\mathcal{G}_I|(V_{IJ})\otimes <A^I>|_{IJ}$.
\end{tabular}
\end{equation}
In this case, if $ U^\infty $  is an open set in $ Gr_k^\infty\times Gr_l^\infty $,  then one can define a canonical homomorphism using the equivalency induced by the above diagrams
\begin{equation*}
{\bar{\iota}}^{*}_{ii'|jj'}|:\Gamma'(V'_I\cap U_{i'j'})\rightarrow \Gamma(V_I\cap U_{ij}),\qquad  m\leq m',\quad  n\leq n',
\end{equation*}
and so
\begin{equation*}
{\bar{\iota}}^{*}_{ii'|jj'}:\Gamma'( U_{i'j'})\rightarrow \Gamma( U_{ij})
\end{equation*}
is defined. The sequence of all these modules and the canonical homomorphisms between them as  $  (\Gamma'( U_{i'j'}),\bar{\iota}^*_{ii'|jj'} ) $ satisfies the inverse limit conditions and induces the module  $ \underset{\leftarrow}{\Gamma}(U^\infty) $. We denote the pre-sheaf  $ U^\infty\mapsto \underset{\leftarrow}{\Gamma}(U^\infty) $ by  $ \underset{\leftarrow}{\Gamma} $ and the corresponding sheaf by  $ \gamma_{k|l}^{\infty} $. This is a sheaf of  $ \underset{\leftarrow}{\mathcal{G}} $-modules that is locally free of rank $ k|l $.
\\
There are  $ \underset{\leftarrow}{\mathcal{G}} $-module homomorphisms from  $ \underset{\leftarrow}{\Gamma}(U^\infty) $ to  $ \Gamma(U_{ij}) $ as:
\begin{equation*}
(s_{rt})\mapsto s_{ij}.
\end{equation*}
These homomorphisms induce the homomorphism  $  \bar{p}_{ij} $ from  $   \gamma_{k|l}^{m|n} $ to  $  \gamma_{k|l}^{\infty} $.
\subsection{Pullback of Super Vector Bundles}
In this section, we show that the super vector bundle  $ \gamma_{k|l}^{\infty} $  is a universal member of the category of super vector bundles. 
\\
With the above assumption and according to Theorem \ref{Sigma}, we have:
\\
Naturally, there is a morphism between the base space of the super vector bundle $\mathcal{E}$ and the base space of the super vector bundle  $ \gamma_{k|l}^{tk|tl} $, for a big enough $ t $, as 
\begin{equation*}
\sigma_t=(\widetilde{\sigma}_t,\sigma^*_t):(M, \mathcal{O})\rightarrow (Gr_{k}^{tk} \times Gr_{l}^{tl}, \mathcal{G}),
\end{equation*}
so that the pullback of  $ \gamma_{k|l}^{tk|tl} $ along  $ \sigma_t  $  is isomorphic to the super vector bundle  $ \cal{E} $. 
\\
$\mathcal{E}$ and the pullback of $\Gamma$ (the canonical super vector bundle over  $_{\nu}Gr$) are isomorphic under  $\sigma$   (Th \ref{sigms iso}). 
Based on what said so far, we have the following theorem: 
\begin{theorem}
	The super vector bundle $ \gamma_{k|l}^{tk|tl} $ and the pullback of $ \gamma_{k|l}^{\infty} $ along $ p_{ij} $ are isomorphic.
\end{theorem}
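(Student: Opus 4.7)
The plan is to mimic the proof of Theorem \ref{sigms iso} in the inverse-limit setting. The natural candidate for the desired isomorphism is the sheaf morphism
\begin{equation*}
\Phi : \mathcal{G} \otimes_{\underset{\leftarrow}{\mathcal{G}}}^{p_{ij}} \underset{\leftarrow}{\Gamma} \longrightarrow \Gamma, \qquad a \otimes s \longmapsto a \cdot \bar{p}_{ij}(s),
\end{equation*}
where the $\underset{\leftarrow}{\mathcal{G}}$-module structure on $\mathcal{G}$ is the one coming from $p_{ij}^{*}$, namely $h * a := p_{ij}^{*}(h)\cdot a$ for $h \in \underset{\leftarrow}{\mathcal{G}}$ and $a \in \mathcal{G}$. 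First I would check that $\Phi$ is well defined, i.e. that the tensor is balanced: for any $h \in \underset{\leftarrow}{\mathcal{G}}$ and $s \in \underset{\leftarrow}{\Gamma}$, the identity $\bar{p}_{ij}(h\cdot s) = p_{ij}^{*}(h)\,\bar{p}_{ij}(s)$ follows from the fact that $\bar{p}_{ij}$ is assembled out of the $\mathcal{G}'_{I}$-to-$\mathcal{G}_{I}$ module maps $\bar{\iota}^{*}$, which are compatible with the gluing through the commutative squares \eqref{a4}.

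Next, following the local strategy used at the end of Theorem \ref{sigms iso}, I would establish that $\Phi$ is an isomorphism by showing it is one locally: both sheaves are locally free $\mathcal{G}$-modules of the same rank $k|l$, so a local isomorphism is automatically global. Over a trivializing open set $W := V_{I} \cap p_{ij}^{-1}(U^{\infty})$ for suitable $I$ and $U^{\infty}$, one has the identifications
\begin{equation*}
\Gamma(V_{I}) \cong \mathcal{G}(V_{I}) \otimes_{\mathbb{R}} \mathbb{R}^{k|l}, \qquad \underset{\leftarrow}{\Gamma}(U^{\infty}) \cong \underset{\leftarrow}{\mathcal{G}}(U^{\infty}) \otimes_{\mathbb{R}} \mathbb{R}^{k|l},
\end{equation*}
and $\bar{p}_{ij}$ corresponds under these identifications to the map induced by sending generators to generators in the fibre $\mathbb{R}^{k|l}$ (the content of $A^{\prime I}_{t} \mapsto A^{I}_{t}$). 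Substituting these into $\Phi$ collapses it, over $W$, to the canonical map
\begin{equation*}
\mathcal{G}(W) \otimes_{\underset{\leftarrow}{\mathcal{G}}}^{p_{ij}} \bigl(\underset{\leftarrow}{\mathcal{G}}(U^{\infty}) \otimes_{\mathbb{R}} \mathbb{R}^{k|l}\bigr) \longrightarrow \mathcal{G}(W) \otimes_{\mathbb{R}} \mathbb{R}^{k|l},
\end{equation*}
obtained by cancelling the middle factor, which is plainly an isomorphism.

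The main obstacle I anticipate is verifying that the local trivializations of $\underset{\leftarrow}{\Gamma}$ chosen over the various $V_{I}$ are genuinely compatible with the $\psi_{IJ}^{*}$-gluing of $\Gamma$ after restriction through $p_{ij}$; equivalently, one must check that the transition data for $\underset{\leftarrow}{\Gamma}$ read off from the inverse system reduce, at the finite stage $(tk|tl)$, to the familiar transition data of $\gamma_{k|l}^{tk|tl}$. This is precisely what passing the commutative squares \eqref{a4} to the limit guarantees, combined with the inverse-limit description of sections as sequences $(f_{i'j'})_{i',j'}$ satisfying $\iota^{*}_{ii'|jj'}(f_{i'j'}) = f_{ij}$. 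Once that compatibility is established, the local-to-global principle for sheaves of modules of the same rank, used at the end of Theorem \ref{sigms iso}, promotes the pointwise isomorphism above to a global isomorphism of super vector bundles over $_{\nu}Gr_{k|l}^{tk|tl}$.
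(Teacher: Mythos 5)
Your proposal is correct and takes essentially the same route as the paper: the paper's map $T\colon u \otimes (s_{i'j'}) \mapsto u\cdot s_{ij}$ is exactly your $\Phi$ with $\bar{p}_{ij}$ written out, and the paper likewise concludes by checking that this map is locally an isomorphism between free modules and then invoking equality of ranks to globalize. Your explicit balancedness check via the squares \eqref{a4} and your identification of the local map with the canonical cancellation isomorphism are somewhat more detailed than the paper's kernel-is-zero argument, but the strategy is identical.
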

\begin{proof}
	By definition, the pullback of  	$ \gamma_{k|l}^{\infty} $  along $ p_{ij} $ is in the form of  	$ \mathcal{G} \otimes _{\underset{\leftarrow}{\mathcal{G}}}^{p_{ij} } \underset{\leftarrow}{\Gamma} $ and it suffices to show that the following mapping is an isomorphism: 
	\begin{align*}
	T:\mathcal{G} \otimes _{\underset{\leftarrow}{\mathcal{G}}}^{p_{ij} } \underset{\leftarrow}{\Gamma} \rightarrow \gamma_{k|l}^{tk|tl},\\
	u \otimes (s_{i'j'}) \mapsto u.s_{ij}.
	\end{align*}
	Consider an arbitrary section like
	\begin{equation*}
	s=\sum_{c=1}^{k+l} 1 \otimes (a_{i'j'}^c)(s_{i'j'}^c)
	\end{equation*}
	with a basis like  	$\{ (s_{i'j'}^c)\}_{c=1,...,k+l} $ on a small enough open set like  $ U^\infty $ in  	$ Gr_k^\infty\times Gr_l^\infty $ . The image of this section under $ T $ in $ \Gamma(U) $   is in the form of  	$ \sum_{c=1}^{k+l}a_{ij}^c.s_{ij}^c $ with the basis  	$\{ s_{ij}^c\}_{c=1,...,k+l} $ for $ i=tk-k $ and $ j=tl-l $.
	
	As a result, if  $ T(s) $   is equal to zero, all the coefficients  	$ a_{ij}^c $ are equal to zero and according to the relation 
	\begin{equation*}
	\sum_{c=1}^{k+l} a_{ij}^c \otimes (s_{i'j'}^c)=\sum_{c=1}^{k+l} 1.(a_{i'j'}^c) \otimes (s_{i'j'}^c) =\sum_{c=1}^{k+l} 1 \otimes (a_{i'j'}^c)(s_{i'j'}^c),
	\end{equation*}
	the section $ s $ is zero too; i.e., the kernel of $ T $ is zero and $ T $ is locally an isomorphism. Since the rank of two super vector bundles on a common base space are equal, $ T $ is an isomorphism generally. 
\end{proof}
The following result is directly derived from the last two theorems.
\begin{corollary}\label{sigmat}
	Each super vector bundle  $\mathcal{E}$  with rank $ k|l $ is isomorphic to the pullback of  $ \gamma_{k|l}^{\infty} $  along  $ p_{ij}\circ\sigma_t $, for a big enough $ t $ and $ i=tk-k $ and $ j=tl-l $.
\end{corollary}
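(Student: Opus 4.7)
The plan is to chain together the two isomorphisms established immediately before the corollary and invoke functoriality of the pullback operation with respect to composition of morphisms. Specifically, Theorem \ref{sigms iso} provides an isomorphism $\mathcal{E} \cong \sigma_t^{*}(\gamma_{k|l}^{tk|tl})$, obtained by choosing $t$ large enough that $\mathcal{E}$ is of finite type with respect to a cover of size $t$, constructing a Gauss morphism $g$ with associated morphism $\sigma_t : (M,\mathcal{O}) \to (Gr_k^{tk} \times Gr_l^{tl}, \mathcal{G})$, and identifying $\mathcal{E}$ with $\mathcal{O} \otimes_{\mathcal{G}}^{\sigma_t} \Gamma$. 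The theorem just above the corollary provides $\gamma_{k|l}^{tk|tl} \cong p_{ij}^{*}(\gamma_{k|l}^{\infty})$, i.e., $\Gamma \cong \mathcal{G} \otimes_{\underset{\leftarrow}{\mathcal{G}}}^{p_{ij}} \underset{\leftarrow}{\Gamma}$, for $i = tk-k$ and $j = tl-l$.

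The first step is simply to write down the composite isomorphism
\begin{equation*}
\mathcal{E} \;\cong\; \mathcal{O} \otimes_{\mathcal{G}}^{\sigma_t} \Gamma \;\cong\; \mathcal{O} \otimes_{\mathcal{G}}^{\sigma_t} \bigl( \mathcal{G} \otimes_{\underset{\leftarrow}{\mathcal{G}}}^{p_{ij}} \underset{\leftarrow}{\Gamma} \bigr),
\end{equation*}
where the first isomorphism is Theorem \ref{sigms iso} applied to a Gauss morphism of $\mathcal{E}$ and the second is obtained by tensoring the preceding theorem's isomorphism with $\mathcal{O}$ over $\mathcal{G}$ via $\sigma_t^{*}$.

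The second step is to identify the right-hand side with the pullback of $\gamma_{k|l}^{\infty}$ along the composite morphism $p_{ij} \circ \sigma_t$. This is the standard associativity of the extension of scalars: using the $\underset{\leftarrow}{\mathcal{G}}$-module structure on $\mathcal{O}$ induced by the ring homomorphism $(p_{ij} \circ \sigma_t)^{*} = \sigma_t^{*} \circ p_{ij}^{*}$, one has a canonical isomorphism
\begin{equation*}
\mathcal{O} \otimes_{\mathcal{G}}^{\sigma_t} \bigl( \mathcal{G} \otimes_{\underset{\leftarrow}{\mathcal{G}}}^{p_{ij}} \underset{\leftarrow}{\Gamma} \bigr) \;\cong\; \mathcal{O} \otimes_{\underset{\leftarrow}{\mathcal{G}}}^{p_{ij} \circ \sigma_t} \underset{\leftarrow}{\Gamma},
\end{equation*}
sending $u \otimes (a \otimes s) \mapsto (u \cdot \sigma_t^{*}(a)) \otimes s$, with inverse $u \otimes s \mapsto u \otimes (1 \otimes s)$; both are well-defined and mutually inverse by the usual argument on elementary tensors extended $\mathcal{O}$-linearly. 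The right-hand side is, by definition, the structure sheaf of the pullback of $\gamma_{k|l}^{\infty}$ along $p_{ij} \circ \sigma_t$, which finishes the proof.

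I do not expect a genuine obstacle here, since both input theorems are already available and the argument is a formal transitivity of pullbacks; the only point that deserves care is verifying that the $\underset{\leftarrow}{\mathcal{G}}$-module structure on $\mathcal{O}$ obtained by going through $\mathcal{G}$ agrees with the one induced directly by $(p_{ij} \circ \sigma_t)^{*}$, which is immediate from the definition $a * b = \sigma_t^{*}(a) \cdot b$ applied twice in succession.
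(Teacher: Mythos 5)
Your proposal is correct and follows exactly the route the paper intends: the paper gives no separate argument for the corollary beyond the remark that it is ``directly derived from the last two theorems,'' and your chaining of Theorem \ref{sigms iso} with the preceding theorem via transitivity of pullbacks (associativity of extension of scalars) is precisely that derivation, with the formal tensor-product step spelled out more explicitly than the paper bothers to.
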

There are other morphisms that can replace $ p_{ij}\circ\sigma_t $ in the Corollary \ref{sigmat} we call these morphisms as "desirable morphisms w.r.t. super vector bundle $ \cal{E} $ ". In the next subsection we introduce such a morphism explicitly for an specific example.
%----------------------------------------------------------------------------------------------------------------------------------------------------------------------------------
%-------------------------------------------------------------------------------------------------------------------------------------------------------------------------------
\subsection{An example of desirable morphisms}
Consider the infinite dimensional Grassmann manifold $ Gr_k^{\infty} =(Gr_k^{\infty} ,\underset{\leftarrow}{\mathcal{O}}) $ introduced in \ref{Gr-infty}.
We show that the embedding of reduced manifold $ Gr_k^{\infty} $ into the supermanifold $ {_\nu Gr_{k|0}^{\infty}} $ denoted by
\begin{equation*}
\underset{\leftarrow}{\iota}: Gr_k^{\infty}\rightarrow   {_\nu Gr_{k|0}^{\infty}}
\end{equation*}
is a desirable morphism w.r.t. canonical vector bundle over $ Gr_k^{\infty} $. In other words one has the following proposition:
\begin{proposition}
	The pullback of  $ {\gamma_{k|0}^{\infty}} $ along  $ \underset{\leftarrow}{\iota}  $  is isomorphic to the vector bundle  $  \gamma_k^{\infty} $. 
\end{proposition}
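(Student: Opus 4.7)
The plan is to reduce the infinite-dimensional statement to the finite-dimensional level first, by establishing an isomorphism $\iota^{*}\gamma_{k|0}^{m|n}\cong \gamma_{k}^{m}$ on each finite stage, and then to verify that these isomorphisms are compatible with the transition maps defining the inverse limits $\underset{\leftarrow}{\mathcal{G}}$ and $\underset{\leftarrow}{\Gamma}$, so that passage to the limit yields the desired isomorphism on $Gr_{k}^{\infty}$.

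First I would fix $m$, $n$ with $k\leq m$, pick a multi-index $I\subseteq\{1,\ldots,m+n\}$ of size $k$, and analyze the situation on the $\nu$-domain $(V_{I},\mathcal{G}_{I})$. Because the reduced base of ${_\nu Gr_{k|0}^{m|n}}$ is $Gr_{k}^{m}$, the image of the finite-dimensional embedding $\iota_{m,n}\colon Gr_{k}^{m}\to {_\nu Gr_{k|0}^{m|n}}$ intersects $V_{I}$ only when $I\subseteq\{1,\ldots,m\}$; for such $I$ the pullback $\iota_{I}^{*}$ sends each even generator $x_{j}$ of $\mathcal{G}_{I}$ to the corresponding coordinate on the standard chart of $Gr_{k}^{m}$ and kills every odd generator $e_{j}$. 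Locally, $\gamma_{k|0}^{m|n}$ is the free $\mathcal{G}_{I}$-module on the rows $A_{1}^{I},\ldots,A_{k}^{I}$, so the pullback is canonically
\begin{equation*}
\mathcal{O}_{V_{I}\cap Gr_{k}^{m}}\otimes^{\iota_{I}^{*}}_{\mathcal{G}_{I}}\bigl(\mathcal{G}_{I}\otimes\langle A_{1}^{I},\ldots,A_{k}^{I}\rangle\bigr)\;\cong\;\mathcal{O}_{V_{I}\cap Gr_{k}^{m}}\otimes\langle \overline{A}_{1}^{I},\ldots,\overline{A}_{k}^{I}\rangle,
\end{equation*}
where $\overline{A}^{I}$ is obtained from $A^{I}$ by applying $\iota_{I}^{*}$ entrywise. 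Since the odd entries of $A^{I}$ are annihilated, $\overline{A}^{I}$ is exactly the representing matrix of the classical local chart of $\gamma_{k}^{m}$ on $V_{I}\cap Gr_{k}^{m}$.

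Next I would check gluing compatibility. The transition homomorphism $\psi_{IJ}^{*}$ of $\gamma_{k|0}^{m|n}$ is built from the inverse of $M_{J}(A^{I}).id_{J}$; after pulling back by $\iota_{I}^{*}$ and restricting to multi-indices $I,J\subseteq\{1,\ldots,m\}$, the matrix $id_{J}$ becomes the ordinary identity and $M_{J}(A^{I})$ becomes $M_{J}(\overline{A}^{I})$, so $\iota_{I}^{*}\psi_{IJ}^{*}$ reduces to the classical transition cocycle of $\gamma_{k}^{m}$. Therefore the local isomorphisms of the previous step assemble into a global isomorphism $\iota_{m,n}^{*}\gamma_{k|0}^{m|n}\cong \gamma_{k}^{m}$. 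To reach the limit, I would then observe that these finite-stage isomorphisms commute with the inverse-limit transition maps $\iota_{ii'|jj'}^{*}$ and $\bar{\iota}_{ii'|jj'}^{*}$ appearing in the commutative squares \eqref{a4}: the canonical inclusion of the representing matrix $A^{I}$ into $A'^{I}$ adds only columns that are killed by $\iota_{I}^{*}$ on the super side and that are absent on the classical side, so both reductions agree. Applying the inverse limit to the structure sheaves and taking direct limits of base spaces then yields a global isomorphism $\underset{\leftarrow}{\iota}{}^{*}\gamma_{k|0}^{\infty}\cong \gamma_{k}^{\infty}$.

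The main obstacle I anticipate is the bookkeeping at the limit rather than the local computation: one has to check that forming the pullback $-\otimes^{\underset{\leftarrow}{\iota}}_{\underset{\leftarrow}{\mathcal{G}}}\underset{\leftarrow}{\Gamma}$ commutes with the inverse-limit description of sections in the sense that a section of the pullback over $U^{\infty}=(U_{ij})_{i,j}$ is precisely a coherent family of sections of $\iota_{m,n}^{*}\gamma_{k|0}^{m|n}$ along the transition maps $\bar{\iota}_{ii'|jj'}^{*}$. Once this coherence is granted, the locally defined isomorphism glues by the same argument used at the end of the proof of Theorem \ref{sigms iso}: both sheaves are locally free of the same rank $k$ over $\underset{\leftarrow}{\mathcal{O}}$, so a locally defined isomorphism is automatically global.
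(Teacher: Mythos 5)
Your proposal is correct and follows essentially the same route as the paper: both work chart by chart on the $\nu$-domains $(V_I,\mathcal{G}_I)$, use the reduction homomorphism that kills the odd generators to identify the pulled-back local frame $A^I$ with the classical frame $\tilde{A}^I$ of $\gamma_k^m$, invoke compatibility with the gluing cocycles and the inverse-limit transition maps, and conclude via the locally-free-of-equal-rank argument. The only difference is explicitness: where the paper asserts that the map $T$ ``can be easily shown'' to be an isomorphism, you spell out the finite-stage cocycle comparison and the coherence of the family of isomorphisms at the limit.
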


\begin{proof}
Let $ \{U_I\} $ be the standard open cover of $ Gr_k^m $. In subsection \ref{grassmannian}, it is shown that the $ \nu $-Grassmannian $ _\nu Gr_{k|0}^{m|n} $ is constructed by gluing $ \nu$-domains $ (V_I,\mathcal{G}_I):=(\mathbb{R}^p,\mathbf{C}^{\infty}_{\mathbb{R}^{p}} \otimes_{\mathbb{R}} \wedge \mathbb{R}^{q}) $. Moreover, $ Gr_k^m $ is constructed by gluing domains $ (U_{I},\mathcal{O}_{I}):=(\mathbb{R}^p,\mathbf{C}^{\infty}_{\mathbb{R}^{p}}) $ through homomorphisms $\phi_{IJ}$, see \eqref{phi}. %$\varphi_{IJ}$ and $\phi_{IJ}$ respectively.
There exists a natural morphism denoted by $ \iota_I^{m, n} $ from $ (U_I, {\mathcal{O}}_I) $ to $ (V_I, {\mathcal{G}}_I) $, for each $ k $ multi-index $ I \subset \{1,2,...,m\}$.
\\
Under ${\iota_I}^{{m, n}^*}:\mathcal{G}_{I}(V_{I})\longrightarrow \tilde{\mathcal{G}}_{I}(V_{I})\cong \mathcal{O}_{I}(U_{I})\label{iI} $, the structure sheaf   $ \mathcal{O}_{I} $  has a $ \mathcal{G}_I $-module structure as well.
The morphisms $ \{\iota_I^{m, n}\}_I $, induce a natural homomorphism $ \iota^{m, n}: Gr_k^m\to\nu Gr_{k|0}^{m|n} $.
Moreover, the collection of all these morphisms i.e. $ \{\iota^{m, n}\}_{m, n} $  determine the homomorphism 
\begin{equation*}\label{iota}
\underset{\leftarrow}{\iota}: Gr_k^{\infty}\rightarrow   {_\nu Gr_{k|0}^{\infty}}.
\end{equation*}
On the other hand, the canonical super vector bundle $ \gamma_{k|0}^{m|n}:=(Gr_k^m\times \{*\},\Gamma)  $ over  $ _\nu Gr_{k|0}^{m|n} $ is constructed by gluing the free $ \mathcal{G}_I $-module sheaves $\mathcal{G}_I\otimes<A^I> := \mathcal{G}_I\otimes<A^I_1,...,A^I_{k}> $  on $ V_I $ through the  $ \mathcal{G} $-module homomorphisms:
\begin{align*}
{\psi}_{IJ}^*:\mathcal{G}_J|(V_{JI})\otimes <A^J>|_{JI}   \longrightarrow
\mathcal{G}_I|(V_{IJ})\otimes <A^I>|_{JI}\\
s\otimes<A^J> \mapsto\varphi^*_{IJ}(s)\otimes<(M_J(A^I).id_J)^{-1}.A^I>,
\end{align*}
where $ \varphi_{IJ}=(id, \varphi_{IJ}^*) $ is gluing morphism defined in step 3 of construction of the $ \nu$-Grassmannians in page 6, and $ A_t^I $ is the $ t $-th row of the supermatrix $ A^I $. In addition, $ <A^I_1, ...,A^I_{k}>  $ is a real vector space generated by $ A_t^I $s. It can be seen that each of these free sheaves are equivalent to  $ (V_I,\mathcal{G}_I\otimes\mathbb{R}^{{k|0}}) $.
\\
The homomorphism $ {\iota_I}^{{m, n}^*} $, induces a canonical homomorphism between sheaves of $ \mathcal{G}_I $-modules as follows 
\begin{align*}
\bar{\iota}_I^{{m, n}^*}:\mathcal{G}_I|({V_{I})}\otimes <A^I>&|_{I}\longrightarrow\mathcal{O}_I|({U_{I}})\otimes <\tilde{A}^I>|_{I},\\
&A^I_t\mapsto \tilde{A}^I_t.
\end{align*}
%\hyperlink{Diag4}{3.3}
The collection of all these homomorphisms determine a homomorphism as follows
\begin{equation*}
\underset{\leftarrow}{\bar{\iota}}: \gamma_k^{\infty}\rightarrow   {\gamma_{k|0}^{\infty}}.
\end{equation*}
By definition, the pullback of $ {\gamma_{k|0}^{\infty}} $ along $ \underset{\leftarrow}{\iota} $ is in the form of $ \underset{\leftarrow}{\mathcal{O}} \otimes _{\underset{\leftarrow}{\mathcal{G}}}^{\underset{\leftarrow}{\iota}} \underset{\leftarrow}{\Gamma} $ and it can be easily shown that the following homomorphism is an isomorphism: 
\begin{align*}
T:\underset{\leftarrow}{\mathcal{O}} \otimes _{\underset{\leftarrow}{\mathcal{G}}}^{\underset{\leftarrow}{\iota}} \underset{\leftarrow}{\Gamma} &\rightarrow \underset{\leftarrow}{\tilde{\Gamma}},\qquad \tilde{s}_{i}=\underset{\leftarrow}{{\bar{\iota}}^*}(s_{i}),\\
(u_{i}) \otimes (s_{i}) &\mapsto (u_{i}.\tilde{s}_{i}).
\end{align*}
This completes the proof.
\end{proof}

\end{document}